\newcommand{\tri}{\mathbin{\overline{\nabla}}}
\newcommand{\etal}{\textit{et~al.}}
\newcommand{\m}{{\mathbf m}}
\newcommand{\n}{{\mathbf n}}
\renewcommand{\k}{{\mathbf k}}
\newcommand{\T}{{\mathbf t}}
\newcommand{\Sb}{{\mathbf S}}
\newcommand{\Hb}{{\mathbf H}}
\newcommand{\Ub}{{\mathbf U}}
\newcommand{\Gb}{{\mathbf G}}
\newcommand{\Sm}{\mathcal{S}}
\newcommand{\mC}{C_{\m}}
\newcommand{\nC}{C_{\n}}
\newcommand{\mD}{D_{\m}}
\newcommand{\mV}{V_{\m}}
\newcommand{\mE}{E_{\m}}
\newcommand{\BigComment}[1]{}
\providecommand{\U}[1]{\protect\rule{.1in}{.1in}}
\newtheorem{theorem}{Theorem}
\newcommand{\WR}{Wildberger \etal}
\newcommand{\p}{\!+\!}
\begin{document}
\title[{Hyper-Catalan and Geode Recurrences}]{Hyper-Catalan and Geode Recurrences  \\ and Three Conjectures of Wildberger}
\markright{Hyper-Catalan and Geode Recurrences}
\author{Dean Rubine}

\date{\today}

%


\begin{abstract}
The hyper-Catalan number  
$C[m_2,m_3,m_4,\ldots]$ counts the number of subdivisions of a roofed
polygon into $m_2$ triangles, $m_3$ quadrilaterals, $m_4$ pentagons,
etc.
Its closed form has been known since Erd\'elyi and Etherington, 1940.
In 2025,
\WR{} showed
its generating sum
$\Sb[t_2,t_3,t_4,\ldots]$ is a zero of the general geometric univariate polynomial.
We use that to derive a recurrence for hyper-Catalans, which expresses each 
in terms of other hyper-Catalans with smaller indices, generalizing the well-known Catalan convolution sum. 

Wildberger notes the factorization $\Sb-1=(t_2 + t_3 + t_4 + \ldots)\Gb$, where the factor $\Gb$ is called the Geode.
We derive a recurrence that let us express the Geode coefficients in terms of other hyper-Catalan and Geode coefficients,
and ultimately in terms of hyper-Catalans alone.
We use it to prove three conjectures of Wildberger, all closed forms for special cases of elements of $\Gb$.  
While the recurrence allows us to expand each Geode coefficient as an integer combination of hyper-Catalans, enabling calculation,
a closed-form for the general Geode coefficient remains unknown, as does what it counts.
\end{abstract}

\maketitle

\section{The geometric polynomial formula}
Wildberger and Rubine~\cite{Wildberger2025} call a univariate polynomial \textbf{geometric} when its constant is $1$ and its linear coefficient is $-1$.
It's straightforward to generalize the geometric polynomial formula into the general polynomial formula, but we won't need that here.

\begin{theorem}[The geometric polynomial formula, \WR{}, 2025~\cite{Wildberger2025}
] \label{thm:geompoly}
The generating sum for the hyper-Catalan numbers:
\begin{align}
\Sb \equiv \sum_{m_2,m_3,m_4, \dots \ge 0} C[m_2, m_3, m_4, \ldots] t_2^{m_2} t_3^{m_3} t_4^{m_4} \cdots \equiv \sum_{\m \ge 0} \mC \T^{\m}
\end{align}
is a formal series zero of the general univariate geometric polynomial or power series:
\begin{align} \label{eqn:galpha}
g(\alpha) = 1 - \alpha + t_2 \alpha^2 + t_3 \alpha^3 + t_4 \alpha^4 +{} \ldots
\end{align}
\end{theorem}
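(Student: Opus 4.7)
The plan is to derive a functional equation for $\mathbf{S}$ directly from the combinatorial definition of hyper-Catalan numbers, and then observe that this equation is exactly $g(\mathbf{S}) = 0$. The key tool is a recursive decomposition of a subdivided roofed polygon obtained by identifying the unique face incident to the roof edge.

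First I would fix notation for a roofed polygon and its subdivisions, making precise that every nonempty subdivision contains a unique face incident to the roof. Then I would set up a bijection: each subdivided roofed polygon is either trivial, contributing the constant $1$, or else is obtained by attaching a $(k+1)$-gon along the roof for some $k \ge 2$, whose $k$ non-roof edges each serve as the roof of a strictly smaller subdivided roofed polygon. This exhibits a bijection between nontrivial subdivisions and tuples consisting of a roof-face shape $(k+1)$ together with an ordered list of $k$ smaller subdivided roofed polygons, which will be strictly smaller by some natural size grading (total edge count, or $\sum (k-1)m_k$).

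Translating the bijection into generating functions and grouping by the shape of the roof face yields
\begin{equation*}
\mathbf{S} \;=\; 1 + t_2 \mathbf{S}^2 + t_3 \mathbf{S}^3 + t_4 \mathbf{S}^4 + \cdots,
\end{equation*}
since the roof $(k+1)$-gon contributes a factor $t_k$ and its $k$ non-roof subregions contribute $\mathbf{S}^k$. Rearranging gives $1 - \mathbf{S} + t_2 \mathbf{S}^2 + t_3 \mathbf{S}^3 + \cdots = 0$, which is precisely $g(\mathbf{S}) = 0$.

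The main obstacle will be formalizing the decomposition cleanly: one must give a canonical definition of a roofed polygon that is preserved under the recursion, verify that the face incident to the roof is always unique, and confirm that the functional equation holds in the formal series ring in $t_2, t_3, \ldots$. The formal-series point is not deep—any fixed monomial $t_2^{m_2} t_3^{m_3} \cdots$ receives contributions from only those $k$ with $m_k \ge 1$, so the infinite sum has well-defined coefficients—but it does need to be stated. Once these combinatorial and formal preliminaries are in place, matching coefficients of $t_2^{m_2} t_3^{m_3} \cdots$ on both sides of the functional equation recovers the recurrence defining the hyper-Catalan numbers, and the theorem follows.
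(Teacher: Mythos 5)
Your proposal is correct and follows essentially the same route as the paper: the decomposition by the unique face containing the roof edge is exactly the paper's ``central polygon,'' and translating that bijection through the accounting-monomial map yields the same functional equation $\Sb = 1 + t_2\Sb^2 + t_3\Sb^3 + \cdots$, hence $g(\Sb)=0$.
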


The $\sum_{\m \ge 0} \mC \T^{\m}$ notation expresses the zero as a sum over \textbf{natural vectors}, i.e. vectors of natural numbers. This allows us to deal with the explosion of terms in this power series of an unbounded number of variables about as easily as if there were a single integer index.

It's worthwhile to sketch their proof, which involves mapping the non-associative algebra of \textbf{subdigons}, subdivided roofed polygons, onto the usual algebra of polynomials.  
A subdigon $s$ that's divided into $m_2$ triangles, $m_3$ quadrilaterals, $m_4$ pentagons,
etc. is of \textbf{type} $\m=[m_2, m_3, m_4, \ldots]$.
Trailing zeros don't change the type, and $\m=[\,]$, the vector of all zeros, is the type of $|$, the \textbf{null subdigon}. 
$\Sm_{\m}$ is the 
multiset of subdigons of type $\m$, and its size $|\Sm_{\m}|$ is the \textbf{hyper-Catalan number} $\mC$, the number of subdigons of type $\m$. 
The \textbf{roof} of a subdigon is a distinguished side of the subdivided polygon.
The \textbf{central polygon} of a subdigon is the inner, un-subdivided polygon containing the roof side.  
$ s= \tri_k(s_1, s_2, \ldots, s_k)$ is the panelling operation that adjoins by its roof each subdigon $s_i$ consecutively counterclockwise to a central $k+1$-gon, whose remaining side is the roof of $s$ (Figure \ref{fig:tri4}).
Extending $\tri_k$ to multisets of subdigons,
\begin{equation}
\tri_k(M_1, M_2, \ldots, M_k) = [ \, \tri_k(s_1, s_2, \ldots, s_k)  : \, s_1 \in M_1, \, s_2 \in M_2, \ldots, s_k \in M_k \, ]
\end{equation} 
\begin{figure} 
\centering
\includegraphics[width=1.0\textwidth]{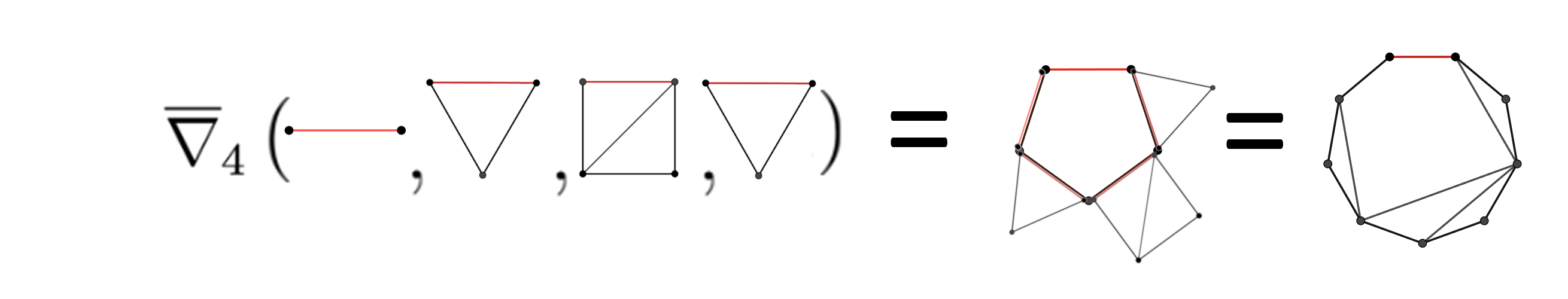}  %
\vspace{-20pt}
\caption{The algebra of subdigons: $\tri_4$  creates a subdigon with a central pentagon.}
\label{fig:tri4} 
\end{figure}
$\psi$ maps a subdigon $s$ of type $\m$ to its \textbf{accounting monomial},
\begin{equation}
 \psi(s)  \equiv t_2^{m_2} t_3^{m_3} t_4^{m_4} \cdots \equiv \T^{\m}
 \end{equation}
and $\Psi(M)=\sum_{s\in M}\psi(s)$ maps a multiset of subdigons to a polynomial that is the sum of the accounting monomials of the elements of $M$.
We have $\Psi(\Sm_{\m})=\mC \T^{\m}$.

We form the multiset of all subdigons $\Sm$ as $\Sm \equiv \sum_{\m \ge 0}\Sm_{\m} $
and define $\Sb \equiv \Psi(\Sm)$ so $\Sb \equiv \Sb[t_2, t_3, t_4, \ldots] = \sum_{\m \ge 0} \mC \T^{\m}$ is the generating sum in an unbounded number of variables for the hyper-Catalan array $\mC$.

The accounting of accounting monomials comes from the identity:
\begin{equation}
\psi(  \tri_k(s_1, s_2, \ldots, s_k) ) = t_k \, \psi(s_1) \psi(s_2)\cdots \psi(s_k) 
\end{equation}
because the multiplications are essentially additions of type vectors, whose components are exponents.
The result extends to multisets via the linearity of $\Psi$:
\begin{equation} \Psi(  \tri_k(M_1, M_2, \ldots, M_k) ) = t_k \, \Psi(M_1) \Psi(M_2)\cdots \Psi(M_k).\end{equation}

Each subdigon $s$ is either the null subdigon $|$ or has a central $k+1$-gon so is of the form
$ s= \tri_k(s_1, s_2, \ldots, s_k)$
for unique subdigons $s_1, s_2, \ldots, s_k$.
The recursive structure implies the multiset equation:
\begin{equation} \Sm = [ \ | \ ] + \tri_2(\Sm, \Sm) + \tri_3(\Sm, \Sm, \Sm) + \tri_4(\Sm, \Sm, \Sm, \Sm) + \ldots\end{equation}
Applying $\Psi$ to both sides, then applying the multiset identity:
\begin{align}
\Psi(\Sm) &= \psi(|) + \Psi( \tri_2(\Sm, \Sm))  + \Psi( \tri_3(\Sm, \Sm, \Sm) ) 
+ \ldots
\\
\Psi(\Sm) &= \psi(|) + t_2 \Psi( \Sm) \Psi(\Sm)  + t_3 \Psi(\Sm)\Psi(\Sm) \Psi(\Sm)   + t_4 (\Psi(\Sm))^4 + \ldots
\\
\Sb &= 1 + t_2 \Sb^2 + t_3 \Sb^3 +  t_4 \Sb^4 + \ldots
\end{align}
so $\Sb$ is the zero of the general geometric polynomial (equation \eqref{eqn:galpha}),
$g(\Sb) = 0$. \hfill \qedsymbol{}

\section{The hyper-Catalan Recurrence} 
Let's write the 
multinomial theorem to get an expression for $\alpha^j$ and ultimately $t_j \alpha^j$ so we can expand $g(\alpha)$ where $\alpha=\sum_{\n \ge 0} \nC \T^{\n}$, the generating series of $\nC$, which we'll initially consider to be an unspecified  array of unbounded dimension.
\begin{equation}
\alpha^j
=  \left( \sum_{\n \ge 0} \nC \T^{\n} \right)^j
= \sum_{\substack{ \sum_{\n \ge 0} k_{\n} =j }} \binom{j}{\bf{k}} \prod_{\n \ge 0}  ( {C}_{\n} \T^{\n} ) ^{k_{\n}}
\end{equation}
In $\alpha$ our terms are indexed by subdigon types instead of natural numbers. For each iteration of the right hand sum, each $\nC \T^{\n}$ term has an associated $k_{\n} \ge 0$. 
We abbreviate the bottom of the multinomial coefficient as 
$\k$; in it we need only include the finite number of non-zero $k_{\n}$s and their order does not matter.
\begin{equation}
\alpha^j
= \sum_{\substack{ \sum_{\n \ge 0} k_{\n} =j }} \binom{j}{\bf{k}} \prod_{\n \ge 0}C_{\n} ^{k_{\n}} \prod_{\n \ge 0}\T^{k_{\n} \n}  
\end{equation}
We're interested in the general term, $[\T^{\m}]\alpha^j$.
Those are the terms where
$\prod_{\n \ge 0} \T^{k_{\n} \n} =\T^{\m}$ or 
$\sum_{\n \ge 0} k_{\n} \n = \m $.
We end up summing over the vector partitions of $\m$ with $j$ parts~\cite{French2018}.
In other words, the term $C_{\n} \T^{\n}$ from $\alpha$ only contributes to $[\T^{\m}]\alpha^j$ when there are natural numbers $\k$ which sum to $j$ and which include a $k_{\n}>0$ such that the linear combination $\sum_{\n \ge 0} k_{\n} \n $ makes $\m$.
\begin{equation} \label{eqn:alphaj}
[\T^{\m}]\alpha^j
= \sum_{\substack{ \sum_{\n \ge 0} k_{\n} =j    \\ \sum_{\n \ge 0} k_{\n} \n = \m   }} \binom{j}{\k} 
\prod_{\n \ge 0} C_{\n} ^{k_{\n}} 
\end{equation}
At this point, this has nothing in particular to do with the hyper-Catalans. It's the $j$th power of a general multivariate series with $C$s as coefficients.
We seek $[\T^{\m}]t_j \alpha^j $.
Let's define basis vectors 
$\vec{j} \equiv [0,0,\ldots, 1]$ where there are $j-2$ zeros.
\begin{equation} 
[\T^{\m}] t_j \alpha^j 
= [\T^{\m - \vec{j} }]  \alpha^j
= \sum_{\substack{ \sum_{\n \ge 0} k_{\n} = j \\ \sum_{\n \ge 0} k_{\n} \n = \m-\vec{j} }} \binom{j}{\k} 
\prod_{\n \ge 0} C_{\n} ^{k_{\n}} 
\end{equation}
We implicitly have the constraint $m_j > 0 $ in the sum, for if $m_j=0$ then $\m-\vec{j}$ would have $-1$ for its $j^{\textrm{th}}$ component, so cannot be a linear combination of natural vectors $\n$ using weights $k_{\n} \ge 0$. When $m_j=0$, clearly $[\T^{\m}] t_j \alpha^j = 0$ because all the terms have a factor of $t_j$; thus $[\T^{\m - \vec{j} }]  \alpha^j = 0$ when $m_j=0$. 

Recall $g(\alpha)=1-\alpha + \sum_{j \ge 2} t_j \alpha^j$ (equation \eqref{eqn:galpha}). 
For $\m \ne [ \ ]$,
\begin{equation} 
[\T^{\m}] g(\alpha) 
= - [\T^{\m}] \alpha + \sum_{j \ge 2} [\T^{\m}] t_j \alpha^j 
= - [\T^{\m}] \alpha + 
\sum_{j \ge 2} \!
\sum_{\substack{ \sum_{\n \ge 0} k_{\n} = j \\ \sum_{\n \ge 0} k_{\n} \n = \m-\vec{j} }} \!\!\! \binom{j}{\k} \prod_{\n \ge 0} C_{\n} ^{k_{\n}} 
\end{equation}
That's true for $g(\alpha)$ applied to $\alpha=\sum_{\n \ge 0} \nC \T^{\n}$ for any array $\nC$. 
From here on we use $\nC$ to denote the hyper-Catalan array,
so $\alpha=\Sb$,  $[\T^{\m}] \alpha =C_{\m}$, and by Theorem \ref{thm:geompoly}, $g(\alpha)=0$.
We've derived the hyper-Catalan recurrence.
\begin{theorem}[Hyper-Catalan Recurrence] \label{thm:hcat}
$C_{[ \, ]}=1$ (that's the Catalan number $C_0$) and for $\m \ne [ \, ]$:
\begin{equation*}
C_{\m} = \sum_{j \ge 2} \ \ \sum_{\substack{ \sum_{\n \ge 0} k_{\n} = j \\ \sum_{\n \ge 0} k_{\n} \n = \m-\vec{j} }} \binom{j}{\k} \prod_{\n \ge 0}  C_{\n} ^{k_{\n}} 
\end{equation*}

\end{theorem}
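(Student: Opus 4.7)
The plan is simply to extract the coefficient of $\T^{\m}$ from both sides of the identity $g(\Sb) = 0$ guaranteed by Theorem~\ref{thm:geompoly}. Recall $g(\alpha) = 1 - \alpha + \sum_{j \ge 2} t_j \alpha^j$. For $\m = [\,]$ the identity forces $1 - C_{[\,]} = 0$, giving the base case $C_{[\,]} = 1$ (independently consistent with the null subdigon being the unique element of $\Sm_{[\,]}$). For $\m \ne [\,]$ the constant term of $g$ contributes nothing, so the recurrence will balance $[\T^{\m}]\alpha = C_{\m}$ against $\sum_{j \ge 2} [\T^{\m}] t_j \alpha^j$, and we only need a formula for the latter.

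The main computational step is to expand $\alpha^j$ by the multinomial theorem, indexing the expansion not by natural numbers but by natural vectors $\n$. Writing $\alpha = \sum_{\n \ge 0} C_{\n} \T^{\n}$, each multinomial term is a product $\prod_{\n} (C_{\n} \T^{\n})^{k_{\n}}$ whose coefficient $\binom{j}{\k}$ has $\k = (k_{\n})_{\n \ge 0}$ summing to $j$. Extracting $[\T^{\m}]$ imposes $\sum_{\n} k_{\n} \n = \m$, i.e.\ $\k$ encodes a vector partition of $\m$ into $j$ parts in the sense of \cite{French2018}. Multiplying by $t_j$ shifts the index constraint to $\m - \vec{j}$, and since $\vec{j}$ carries a $1$ in the $j$-th slot the shift automatically enforces $m_j \ge 1$, consistent with the obvious fact that $[\T^{\m}] t_j \alpha^j = 0$ when $m_j = 0$.

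Summing over $j \ge 2$ and solving $[\T^{\m}] g(\Sb) = 0$ for $C_{\m}$ then yields the recurrence verbatim. The sum over $j$ is effectively finite because a nonzero contribution requires $m_j \ge 1$ and enough ``room'' in $\m - \vec{j}$ to be split into $j$ nontrivial parts, so only $j$ at most roughly $2 + \sum_i m_i$ matter.

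The one subtlety to watch is the status of the multinomial theorem when the variable index set is infinite --- as it is for $\alpha$, a formal series in $t_2, t_3, \ldots$ indexed by natural vectors. Either we invoke the general multinomial theorem for formal power series over an arbitrary index set (only finitely many $k_{\n}$ are nonzero in each summand) or, equivalently, we note that $[\T^{\m}]$ sees only the finitely many $\n$ bounded componentwise by $\m$, so we may truncate $\alpha$ to that finite slice before expanding. Either view makes the expansion and the interchange of summation routine, after which the recurrence is essentially forced by Theorem~\ref{thm:geompoly}.
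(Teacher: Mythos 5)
Your proposal is correct and follows essentially the same route as the paper: extract $[\T^{\m}]$ from $g(\Sb)=0$, expand $\alpha^j$ by the multinomial theorem indexed over natural vectors so that the constraint becomes a vector partition of $\m-\vec{j}$ into $j$ parts, and solve for $C_{\m}$. Your remark on justifying the multinomial expansion over an infinite index set (truncating to the finitely many $\n$ bounded componentwise by $\m$) is a small but welcome addition of rigor that the paper leaves implicit.
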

As an example, let's try $C[1,1]$.
Non-zero $m_2$ and $m_3$ make the outer sum just over $j=2$ and $j=3$.  For the first we're aiming for $\m-\vec{j}=[0,1]$; for the second $\m-\vec{j}=[1,0]=[1]$.
The $[0,1]$ gives a single term with $j=2, k[\,]=1, C[\,]=1, k[0,1]=1, C[0,1]=1$ and the $[1]$ gives a single term with $j=3, k[\,]=2, C[\,]=1, k[1]=1, C[1]=1$.  So
\begin{equation}
C[1,1] =  \binom{2}{1,1} 1^1 1^1 + \binom{3}{2,1} 1^1 1^1 = 2+3=5  \quad\checkmark    
\end{equation}
That's correct; it may be checked using Theorem \ref{thm:hypercatalanclosedform} below.

The well-known Catalan convolution identity $C_{m+1} = \sum_{n=0}^{m}  C_{n} C_{m-n}$
may be derived directly from the hyper-Catalan recurrence.
We leave that for the reader.
In his Section 2.3.4.4, Knuth~\cite{Knuth1997} infers the Catalan convolution from a simple argument counting binary trees.

\section{Multiparameter Fuss-Catalan numbers}

Wildberger's polynomial formula uses $\mC$, the array of hyper-Catalan numbers. 

\begin{theorem}[Hyper-Catalan Closed Form, Erd\'elyi and Etherington, 1940~\cite{Erdelyi1940}] \label{thm:hypercatalanclosedform}
The number of subdigons of type $\m=[m_2, m_3, m_4, \ldots]$ is
\begin{align*}
\mC &=\dfrac{( 2m_2 + 3m_3 + 4m_4 + \ldots )!}{(1 + m_2 + 2m_3 + 3m_4 + \ldots)! \,  m_2! \, m_3! \,  m_4! \cdots} .
\end{align*}
\end{theorem}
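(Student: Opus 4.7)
The plan is to apply Lagrange--B\"urmann inversion to the fixed-point equation $\Sb = 1 + \sum_{j \ge 2} t_j \Sb^j$ established in Theorem \ref{thm:geompoly}. The obstacle is that this equation is multivariate in $t_2, t_3, \ldots$, while the classical Lagrange--B\"urmann formula handles a single implicit variable. I would overcome this by introducing a grading variable $z$ and substituting $t_j = z\,\widehat{t}_j$. Setting $W \equiv \Sb - 1$, the equation becomes
\begin{equation*}
W = z\, F(W), \qquad F(y) \equiv \sum_{j \ge 2} \widehat{t}_j\,(1+y)^j,
\end{equation*}
where $F(y)$ is a formal power series in $y$ whose coefficients are polynomials in the $\widehat{t}_j$. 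By construction, the $z$-grading tracks the total face count $M \equiv m_2 + m_3 + m_4 + \cdots$, so $[\T^{\m}]\Sb = [\widehat{t}_2^{m_2}\widehat{t}_3^{m_3}\cdots][z^M]W$ whenever $\m \ne [\,]$.

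Next, I would apply the Lagrange--B\"urmann identity
\begin{equation*}
[z^M]W = \tfrac{1}{M}\,[W^{M-1}]\,F(W)^M
\end{equation*}
and expand $F(W)^M$ by the multinomial theorem as a sum over compositions of $M$ into nonnegative parts indexed by $j \ge 2$. Only the composition $(m_2, m_3, \ldots)$ contributes to the coefficient of $\prod_j \widehat{t}_j^{m_j}$, isolating the expression
\begin{equation*}
\binom{M}{m_2, m_3, \ldots}\,(1+W)^N, \qquad N \equiv 2m_2 + 3m_3 + 4m_4 + \cdots.
\end{equation*}
Extracting $[W^{M-1}]$ from $(1+W)^N$ via the binomial theorem yields $\binom{N}{M-1}$, giving
\begin{equation*}
\mC = \tfrac{1}{M}\binom{M}{m_2, m_3, \ldots}\binom{N}{M-1}.
\end{equation*}

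Finally, I would simplify this product arithmetically. The factor $\tfrac{1}{M}\binom{M}{m_2, m_3, \ldots}$ equals $(M-1)!/(m_2!\,m_3!\cdots)$, whose $(M-1)!$ cancels against that in $\binom{N}{M-1}$, and the identity $N - M + 1 = 1 + m_2 + 2m_3 + 3m_4 + \cdots$ rearranges the surviving factorial into the stated denominator. The base case $\m = [\,]$ is $C_{[\,]}=1$, matching $0!/1!$ in the formula. The main subtlety will be verifying that Lagrange--B\"urmann applies despite $F(0) = \sum_{j \ge 2} \widehat{t}_j$ not being a unit of the coefficient ring; this holds because the formal-power-series form of the theorem only requires $W = zF(W)$ to have a unique solution in the $z$-adic topology, which it manifestly does (iterate $W_0 = 0,\ W_{n+1} = zF(W_n)$). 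An alternative route would be induction on $\m$ using the hyper-Catalan recurrence of Theorem \ref{thm:hcat}, but collapsing the resulting double sum of closed-form values appears significantly more tedious than the Lagrange approach.
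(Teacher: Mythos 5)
Your argument is correct, but note that the paper does not actually prove this theorem: it is stated with a citation to Erd\'elyi and Etherington (1940) and used as a black box, so there is no internal proof to compare against. Your Lagrange--B\"urmann derivation is a clean, self-contained route. The key steps all check out: with $t_j = z\widehat{t}_j$ and $W = \Sb - 1$, the fixed-point equation from Theorem \ref{thm:geompoly} becomes $W = zF(W)$ with $F(y) = \sum_{j\ge2}\widehat{t}_j(1+y)^j$; the $z$-grading correctly isolates the face count $M$; the multinomial expansion of $F(y)^M$ contributes only the single composition $k_j = m_j$ to the coefficient of $\prod_j\widehat{t}_j^{m_j}$; and $\tfrac{1}{M}\binom{M}{m_2,m_3,\ldots}\binom{N}{M-1}$ with $N - M + 1 = 1 + m_2 + 2m_3 + \cdots$ collapses to the stated quotient of factorials (I verified $\m=[1,1]$ gives $5$, matching the paper's example). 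The one place deserving slightly more care is the applicability of Lagrange inversion when $F(0)=\sum_j\widehat{t}_j$ is not a unit of the coefficient ring; the cleanest fix is to observe that both sides of $[z^M]W=\tfrac{1}{M}[y^{M-1}]F(y)^M$ are polynomial identities in the coefficients of $F$, so the generic case (coefficients as indeterminates) implies the specialized one --- your $z$-adic uniqueness remark gets at the same point but is stated a bit loosely. One could alternatively prove the theorem combinatorially via the cycle lemma or Raney's result, which the paper quotes later in the multiparameter Fuss--Catalan section; your approach has the advantage of deriving the count directly from the algebraic characterization $g(\Sb)=0$ that the paper takes as its starting point.
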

\newcommand{\mMu}{\boldsymbol\mu}

That turns out to be a special case of what S. R. Mane \cite{Mane2024} calls the \textbf{multiparameter Fuss-Catalan numbers}:
\begin{equation}
A_{\m}( \mMu, r) = \dfrac{r}{\m !} \prod_{j=1}^{-1 +\sum_i m_i} (\m \cdot \mMu +r -j) .
\end{equation}
Raney \cite{Raney1960} showed (for arrays of integers $\mMu$):
\begin{equation}
\sum_{\m \ge 0} A_{\m}( \mMu, r) \  \T^{\m} =\left( \sum_{\n \ge 0} A_{\n}( \mMu, 1) \  \T^{\n} \right)^r  
\end{equation}
Graham \cite{Graham1989} generalizes the (singleton case) $\mMu$ to a complex number, which generalizes the polynomials we're solving into signomials, sums of terms where the exponents are arbitrary complex numbers.  We'll continue to focus on polynomials.

Taking $\m=[m_2, m_3, m_4, \ldots]$ and defining $\mC^{(r)} \equiv A_{\m}( [2,3,4,\ldots], r) $:
\begin{equation}
\mC^{(r)}  =\dfrac{r ( r-1 + 2m_2 + 3m_3 + 4m_4 + \ldots )!}{(r + m_2 + 2m_3 + 3m_4 + \ldots)! \,  m_2! \, m_3! \,  m_4! \cdots}  .
\end{equation}
\begin{equation}
\mC^{(r)}  =\dfrac{r ( r-2 + \mE  )!}{(r-2  +\mV)! \,  m_2! \, m_3! \,  m_4! \cdots}  .
\end{equation}
we have $\mC = \mC^{(1)}$ and per Raney:
\begin{equation} 
\mC^{(r)}  = [ \T^{\m} ] \left( \sum_{\n \ge 0} \nC \T^{\n} \right)^r   =  [ \T^{\m} ] \Sb ^r
\end{equation}

Combining with equation (\ref{eqn:alphaj}), we get:

\begin{theorem}[Powers of \bf S]
\begin{equation}
\mC^{(r)} 
 =\dfrac{r ( r-1 + 2m_2 + 3m_3 + \ldots )!}{(r + m_2 + 2m_3 +  \ldots)! \,  m_2! \, m_3! \cdots}  
= \sum_{\substack{ \sum_{\n \ge 0} k_{\n} =r    \\ \sum_{\n \ge 0} k_{\n} \n = \m   }} \binom{r}{\k} 
\prod_{\n \ge 0} C_{\n} ^{k_{\n}} 
\end{equation}
where $\Sb$ is the generating function for the hyper-Catalan numbers, $\mC = [ \T^{\m} ] \Sb$, and $\mC^{(r)} =  [ \T^{\m} ] \Sb ^r$.
\end{theorem}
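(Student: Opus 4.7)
The plan is to identify both claimed expressions for $\mC^{(r)}$ as two descriptions of $[\T^{\m}]\Sb^r$: one obtained directly from the multinomial expansion carried out earlier, and the other via a closed-form result of Raney already cited in the paper.

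First I would read off the sum expression from equation \eqref{eqn:alphaj}. That equation was derived for a generic $\alpha = \sum_{\n \ge 0} \nC \T^{\n}$ and integer exponent $j$, and it says
\begin{equation*}
[\T^{\m}]\alpha^j = \sum_{\substack{\sum_{\n \ge 0} k_{\n} = j \\ \sum_{\n \ge 0} k_{\n} \n = \m}} \binom{j}{\k} \prod_{\n \ge 0} C_{\n}^{k_{\n}}.
\end{equation*}
Specializing $\alpha = \Sb$ (so $C_{\n}$ is the hyper-Catalan array) and $j = r$ immediately gives the right-hand side of the theorem as $[\T^{\m}]\Sb^r$.

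Next I would produce the factorial closed form. The Raney identity recalled just above the theorem states $\sum_{\m \ge 0} A_{\m}(\mMu, r)\T^{\m} = \bigl(\sum_{\n \ge 0} A_{\n}(\mMu, 1)\T^{\n}\bigr)^r$ for integer arrays $\mMu$. Setting $\mMu = [2,3,4,\ldots]$ and invoking Theorem \ref{thm:hypercatalanclosedform} to recognize $A_{\n}([2,3,4,\ldots], 1) = \nC$, the right-hand side becomes $\Sb^r$, so $[\T^{\m}]\Sb^r = A_{\m}([2,3,4,\ldots], r) = \mC^{(r)}$. It then remains only to simplify the defining product: with $N = \sum_i m_i - 1$, the $N$ consecutive descending integer factors $(\m \cdot \mMu + r - j)$ for $j = 1, \ldots, N$ collapse to $\frac{(r - 1 + 2m_2 + 3m_3 + \ldots)!}{(r + m_2 + 2m_3 + 3m_4 + \ldots)!}$, which combined with the $r/\m!$ prefactor matches the stated formula.

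The main obstacle is minor and technical: Raney's identity is classically stated for finite-dimensional $\mMu$, whereas here $\mMu = [2,3,4,\ldots]$ is a countably infinite sequence. Since each $\m$ has finite support, only finitely many components of $\mMu$ interact with any fixed coefficient, so the infinite version reduces to Raney's finite-dimensional statement truncated to the support of $\m$. Alternatively, one can bypass Raney entirely by noting that $\Sb^r = \Psi(\Sm)^r$ is the generating sum for ordered $r$-tuples of subdigons via the same panelling argument used in Theorem \ref{thm:geompoly}, and then count such $r$-tuples of total type $\m$ directly. Either route yields both equalities, completing the proof.
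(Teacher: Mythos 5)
Your proposal is correct and follows essentially the same route as the paper, which obtains the factorial form from Raney's identity $\mC^{(r)} = [\T^{\m}]\Sb^r$ (established in the preceding paragraphs via the multiparameter Fuss--Catalan numbers) and the sum form by specializing equation \eqref{eqn:alphaj} to $\alpha = \Sb$ and $j = r$. Your added remarks on the finite support of $\m$ and the alternative combinatorial route via $r$-tuples of subdigons are sound but not needed beyond what the paper already assumes.
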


For example, let's take $r=3$, $\m=[102]$ (we omit commas and sometimes brackets in type vectors for brevity) which means we want  $C^{(3)}_{102} = [t_2 t_4^2 ] \Sb^3$.  Cubing the relevant part of $\Sb$,
\begin{align}
      C^{(3)}_{102} &= [ t_2 t_4^2](1 + t_2+t_4 + 6t_2t_4+ 4t_4^2 + 45t_2 t_4^2 )^3 
\\ &  \nonumber 
= [ t_2 t_4^2]( \ldots 
 + 3(45t_2 t_4^2) + 6(t_2)(6t_2t_4) + 6(t_2)(4t_4^2) + 3(t_2)(t_4)^2 + \ldots 
)
\\ &  \nonumber  
= 198.
\end{align}

The closed form formula says:
\begin{equation} C^{(3)}_{102}  =\dfrac{3 ( 3-1 + 2(1) + 4(2))!}{(3 + 1+ 3(2))! \,  1! \, 2!}  =198. \quad\checkmark
\end{equation}
We see that to collect terms of type $\m$ on the 3rd power of our series, our sum is over vector partitions of $\m=[102]$ into $r=3$ parts; there are four:
\begin{align}
& \m=2[\,]+1[102], && {\textstyle \binom{3}{2,1} } C_{[\,]}^{2} C_{102}^1= (3)1^2 45^1 =135
\\
& \m=1[\,]+1[001]+1[101], &&   {\textstyle \binom{3}{1,1,1} }C_{[\,]}^{1} C_{001}^1 C_{101}^1= (6) 1^1  1^1   6^1 =36
\\
& \m=1[\,]+1[002]+1[1], &&   {\textstyle \binom{3}{1,1,1}} C_{[\,]}^{1} C_{002}^1 C_{1}^1= (6) 1^1 4^1 1^1 =24
\\
& \m=2[001]+1[1], &&   {\textstyle \binom{3}{2,1} } C_{001}^{2} C_{1}^1 = (3) 1^2 1^1=3
\end{align}
so  our recurrence says 
\begin{equation}
C^{(3)}_{102} =  [ t_2 t_4^2]\Sb ^3 = 135+36+24+3=198. \ \ \checkmark \ 
\end{equation}
For each vector $\n$ in the partition we get a factor of $\nC$ in the term; the multinomial coefficient counts how many times this term happens.

S. R. Mane, by collecting historical results and adding his own research, has made great advances in the series solution to polynomial equations. In particular, geometric form is just one way to make a pair of coefficients $\pm 1$, and Mane lays out how each such transformation gives rise to different series solutions, each with different convergence properties. In geometric form, $0=1-\alpha + \ldots$ we are essentially perturbing the root $\alpha=1$; if instead we normalize to $0=1 \pm \alpha^n + \ldots$ we are perturbing $n$th roots of unity or $-1$, so we get $n$ series roots for signomials with rational exponents.

\section{The Geode Factorization}
\WR{} uncover the Geode while examining a face layering of $\Sb$, which they call $ \Sb_F  = \Sb[ft_2, ft_3, ft_4, \ldots]$.
Expanding, they get a layering of $\Sb$ into ongoing power series
of a given total degree, accounting for subdigons with a given number of faces, and they notice a factorization at each face level:
\begin{align}
\Sb_{F}  = {} & 1+\left(  t_{2}+t_{3}+t_{4}+\ldots\right)  f
\\[-2pt]
& +\left(  2t_{2}^{2}+5t_{2}t_{3}+3t_{3}^{2}+6t_{2}t_{4}+7t_{3}t_{4}
+4t_{4}^{2}+\cdots\right)  f^{2}  
\nonumber \\[-2pt]
& +(
5t_{2}^{3}+21t_{2}^{2}t_{3}+28t_{2}t_{3}^{2}+12t_{3}^{3}+28t_{2}^{2}
t_{4}+72t_{2}t_{3}t_{4} 
\nonumber \\[-2pt] & \qquad
{}+45t_{2}t_{4}^{2}+45t_{3}^{2}t_{4}+55t_{3}t_{4}^{2}+22t_{4}^{3}+\cdots
) f^{3}
\nonumber 
+\cdots
\\ 
\Sb_{F}  = {} & 1+\Sb_1  f + \Sb_1  \left(
2t_{2}+3t_{3}+4t_{4}+\ldots \right)f^2
 \\[-2pt]
& +\Sb_1  \left(  5t_{2}^{2}+16t_{2}t_{3}
+12t_{3}^{2}+23t_{2}t_{4}+33t_{3}t_{4}+22t_{4}^{2}+\cdots \right) f^{3}
+\cdots \nonumber
\end{align}
where $\Sb_1 \equiv t_2 + t_3 + t_4 + \ldots$.
That leads them to:
\begin{theorem}[The Geode factorization, \WR{}, 2025] \label{thm:geodefactorization}
There is a unique polyseries $\Gb $ satisfying $\Sb-1=\Sb_1\Gb.$
\end{theorem}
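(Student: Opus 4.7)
The plan is to prove existence by constructing $\Gb$ explicitly from the polynomial formula, and to prove uniqueness by showing $\Sb_1$ is not a zero divisor in the polyseries ring.

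For existence, I would start from the polynomial formula in the form $\Sb - 1 = \sum_{j \ge 2} t_j \Sb^j$ and apply the geometric series identity $\Sb^j - 1 = (\Sb - 1)(1 + \Sb + \cdots + \Sb^{j-1})$ to each summand. This lets me rewrite $t_j \Sb^j = t_j + (\Sb - 1)\, t_j (1 + \Sb + \cdots + \Sb^{j-1})$, and summing over $j \ge 2$ yields
\[
\Sb - 1 \;=\; \Sb_1 + (\Sb - 1)\, H, \qquad H \equiv \sum_{j \ge 2} t_j \sum_{i=0}^{j-1} \Sb^i,
\]
which rearranges to $(\Sb - 1)(1 - H) = \Sb_1$. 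Every monomial of $H$ carries at least one factor $t_j$ with $j \ge 2$, so $H$ has no constant term; in particular only finitely many of the powers $H^k$ contribute to any given monomial. Thus $\Gb \equiv (1 - H)^{-1} = \sum_{k \ge 0} H^k$ is a well-defined polyseries, and multiplying through gives $\Sb - 1 = \Sb_1 \Gb$.

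For uniqueness, it suffices to show $\Sb_1 P = 0$ forces $P = 0$ for any polyseries $P = \sum_\m p_\m \T^\m$. Extracting the $\T^\mu$-coefficient of $\Sb_1 P$ yields the linear relation $\sum_{j \ge 2,\, \mu_j \ge 1} p_{\mu - \vec{j}} = 0$ for every $\mu$. I would do a double induction, first on the face count $|\m| \equiv \sum_j m_j$ and within each level on the weight $w(\m) \equiv \sum_j (j - 2)\, m_j$. The bases are immediate: at $\mu = \vec{2}$ the relation reduces to $p_{[\,]} = 0$, and when $w(\m) = 0$ (so $m_j = 0$ for $j \ge 3$), the relation at $\mu = \m + \vec{2}$ has only the $j = 2$ contribution and gives $p_\m = 0$. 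In the inductive step the same choice $\mu = \m + \vec{2}$ yields $p_\m = -\sum_{j \ge 3,\, m_j \ge 1} p_{\m + \vec{2} - \vec{j}}$, and every summand index has the same $|\cdot|$ as $\m$ but strictly smaller $w$ (dropped by $j - 2 \ge 1$), so the inner induction closes.

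The main obstacle I anticipate is purely organizational, in the uniqueness argument: a single induction on $|\m|$ stalls because $|\m + \vec{2} - \vec{j}| = |\m|$, and a lex-based argument stalls because $\m + \vec{2} - \vec{j}$ is lex-greater than $\m$ at its second component. Introducing the auxiliary weight $w(\m)$ as the inner induction parameter resolves both issues in one stroke, and no other conceptual difficulty should arise.
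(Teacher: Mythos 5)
Your proof is correct, but it takes a genuinely different route from the one the paper gives for this theorem. The paper's proof is combinatorial: it layers $\Sb$ by face count, $\Sb=\sum_{n\ge 0}\Sb_n$ with $\Sb_n=\Psi(\Sm_n)$, and argues by induction on $n$ that every $\Sb_n$ with $n\ge 1$ is divisible by $\Sb_1$, because each subdigon with $n\ge 1$ faces arises as $\tri_k$ applied to subdigons at least one of which has between $1$ and $n-1$ faces. Your existence argument is instead purely algebraic: starting from $\Sb-1=\sum_{j\ge 2}t_j\Sb^j$ and factoring $\Sb^j-1$ as a geometric sum, you arrive at $(\Sb-1)(1-H)=\Sb_1$ and hence the explicit formula $\Gb=(1-H)^{-1}=\sum_{k\ge 0}H^k$. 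This is essentially Gessel's derivation, which the paper records in its final section as a later development (his $\Gb=\bigl(1-\sum_{n\ge2}t_n(1+\Sb+\cdots+\Sb^{n-1})\bigr)^{-1}$), not as the proof of this theorem. Each approach buys something: the paper's argument exhibits the quotient face layer by face layer and makes the combinatorial origin of $\Gb$ visible, while yours produces a closed formula for $\Gb$ and, since $H$ has nonnegative integer coefficients, would also immediately give the naturality of the Geode coefficients had you noted it. You also supply an explicit uniqueness argument (that $\Sb_1$ is not a zero divisor), which the paper leaves implicit; your argument is sound, though the outer induction on the face count $|\m|$ is never actually invoked in your step --- the single induction on the weight $w(\m)=\sum_j(j-2)m_j$ already closes, since every index $\m+\vec{2}-\vec{j}$ appearing on the right has strictly smaller $w$.
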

We review their inductive proof.  
Let $\Sm_n$ be the multiset of subdigons with $n$ faces and $\Sb_n=\Psi(\Sm_n)$.
For $n=0$ we have $\Sm_0 = [ \, |  \, ]$, 
and $\Sb_0=\Psi(\Sm_0)=1$. 

For $n \ge 1$, subdigons in $\Sm_n$ are necessarily formed by applying $\tri_k$ to $k$-tuples of subdigons with fewer than $n$ faces. 
$\tri_k$ adds a face, so for $n=1$ face, the subdigons all come from
$\tri_k(\Sm_0, \Sm_0, \ldots)$, each of which gives a singleton whose element is a subdigon with exactly one face, the unsubdivided $(k+1)$-gon. Those subdigons form $\Sm_1$, so $ \Sb_1 = \Psi(\Sm_1)= t_2 + t_3 + t_4 + \ldots$, agreeing with above.

For $n=2$ faces, $\Sm_2$, in each $\tri_k$ operation $\Sm_1$ must appear once and $\Sm_0$ must appear $k-1$ times. 
Applying $\Psi$  to a term of $\Sm_2$ gives: 
\begin{equation}
\Psi(\tri_k(\Sm_{j_1}, \Sm_{j_2}, \ldots, \Sm_{j_k}) ) = t_k \Psi(\Sm_1) (\Psi(\Sm_0))^{k-1} = t_k \Sb_1.
\end{equation}
$\Sb_1$ is a factor of each term of $\Sb_2=\Psi(\Sm_2)$; we conclude that $\Sb_1$ is a factor of $\Sb_2$.
Similarly each term $\tri_k(\Sm_{j_1}, \Sm_{j_2}, \ldots, \Sm_{j_k})$ of $\Sb_n$ for $n \ge 3$ has at least one non-zero $j_i$, so each term has at least one of $\Sb_1$, $\Sb_2$, ... $\Sb_{n-1}$ as a factor, so $\Sb_1$ as a factor.  
For $n \ge 1$ we conclude $\Sb_n$ has $\Sb_1$ as a factor,
so $\Sb_1$ is a factor of $\sum_{n\ge 1} \Sb_n = \Sb - 1$.
\hfill \qedsymbol{}
\section{The Geode Recurrence}
Theorem \ref{thm:geodefactorization} says $$
-1 + \Sb[ t_2,  t_3, \ldots] = (t_2 + t_3 + t_4 + \ldots) \Gb[t_2,  t_3, \ldots]
$$
For a non-degenerate type vector $\m  \ne [ \, ]$ (at least one non-zero component),
\begin{align}
[\T^{\m}] \Sb[ t_2,  t_3, \ldots] &= [\T^{\m}] \sum_{j \ge 2}  t_j \Gb[t_2,  t_3, \ldots]
=  \sum_{ \substack{ j \ge 2 \\ m_j > 0 }}  [\T^{\m - \vec{j}}] \Gb[t_2,  t_3, \ldots]
\\
\mC &= \sum_{ \substack{ j \ge 2 \\ m_j > 0 }}  G_{\m - \vec{j}} 
\end{align}
where analogously to the hyper-Catalans we denote the elements of the Geode array as $G_{\n} \equiv [\T^{\n}] \Gb$.
We define $\mD$ as the  number of \textbf{distinct shapes} of a subdigon of type $\m$,
and we define $L(\m)$, 
the \textbf{lessers} of a type $\m$, as the set of types $\k$ such that $\m=\k + \vec{j}$ for some natural $j \ge 2$.
Clearly  $|L(\m)|=\mD$ as we get a lesser for each non-zero component of $\m$.
\begin{equation}
\mD \equiv \sum_{\substack{j\ge 2 \\ m_j>0}} 1, \qquad
L(\m) \equiv \{ \k : \m=\k+\vec{j} \textrm{ for some natural } j \ge 2 \ \} 
.    
\end{equation}
\begin{theorem}[Lesser Geode Sum]\label{thm:geodelessersum}  
For $\m \ne [\,]$,
$\displaystyle \ 
\mC = \sum_{\k \in L(\m)} G_{\k} 
$.
\end{theorem}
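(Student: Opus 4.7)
The plan is to read off both sides of the Geode factorization (Theorem \ref{thm:geodefactorization}) coefficient-by-coefficient and then recognize the result as a sum indexed by lessers. Since $\Sb_1 = \sum_{j\ge 2} t_j$, the factorization $\Sb - 1 = \Sb_1 \Gb$ expands to $\Sb - 1 = \sum_{j \ge 2} t_j \Gb$. For $\m \ne [\,]$, applying the operator $[\T^{\m}]$ to the left side yields $\mC$, because the constant $1$ contributes nothing. Applying it to the right side term by term, $[\T^{\m}](t_j \Gb) = [\T^{\m-\vec{j}}]\Gb = G_{\m-\vec{j}}$ whenever $m_j > 0$, and equals $0$ otherwise (since then $\m-\vec{j}$ has a negative $j$th component, so no corresponding monomial appears in $\Gb$). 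This gives the preliminary formula
\begin{equation*}
\mC = \sum_{\substack{j \ge 2 \\ m_j > 0}} G_{\m-\vec{j}}.
\end{equation*}

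Second, I would identify the index set of this sum with $L(\m)$. By definition $L(\m) = \{\k : \m = \k + \vec{j} \text{ for some } j \ge 2\}$, so the assignment $j \mapsto \m - \vec{j}$ maps the set $\{j \ge 2 : m_j > 0\}$ surjectively to $L(\m)$. It is also injective, because the distinct basis vectors $\vec{j}$ produce distinct differences $\m - \vec{j}$; equivalently, any $\k \in L(\m)$ determines $j$ uniquely as the coordinate in which $\m$ exceeds $\k$ by one. Reindexing the sum by $\k = \m - \vec{j}$ then gives $\mC = \sum_{\k \in L(\m)} G_{\k}$, which is the claim. Note that $|L(\m)| = \mD$ falls out of the same bijection and matches the displayed definition of $\mD$.

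There is no real obstacle: the argument is a one-step coefficient extraction from a factorization already in hand, and the only bookkeeping is the (trivial) bijection between indices $j$ with $m_j>0$ and lessers $\k \in L(\m)$. The only place to be mildly careful is restricting to $\m \ne [\,]$, so that the constant term $1$ on the left of $\Sb - 1$ really does drop out; this exclusion is already part of the theorem's hypothesis.
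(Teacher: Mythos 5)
Your proposal is correct and is essentially the paper's own argument: the paper likewise extracts the coefficient of $\T^{\m}$ from both sides of $\Sb - 1 = \sum_{j\ge 2} t_j \Gb$, obtains $\mC = \sum_{j \ge 2,\, m_j > 0} G_{\m - \vec{j}}$, and reindexes over the lessers $L(\m)$. The only difference is that you spell out the bijection between $\{j : m_j > 0\}$ and $L(\m)$ explicitly, which the paper treats as immediate.
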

To enumerate $L(\m)$ we simply subtract one from each non-zero component of $\m$.  

\renewcommand{\l}{{\mathbf l}}

Note that $\m-\vec{j}$ also appears in the hyper-Catalan Recurrence (Theorem \ref{thm:hcat}), which could be recast in terms of lessers, though we'd need more notation to replace some explicit appearances of $j$. 

Consider types with one distinct shape, $D_{\m}=1$.
Those subdigons are divided into all triangles, or all pentagons, or any other single kind of $k$-gon.
This question of division into $m$ $k$-gons was first considered by Fuss in 1795, and we'll refer to the two dimensional array of counts as the Fuss numbers~\cite{Aval2008}. 

We introduce vector expressions in parentheses as array indices, e.g. $C( m \vec{k})$ for $C[0,0, \ldots,0, m]$,
where there are $k-2$ zeros,
is the Fuss number giving the number of ways to divide a roofed polygon into $m$ $k+1$-gons.
Such a subdigon
has type $\m = m \vec{k}$ and $D_{\m}=1$ (provided  $\m \ne [\,]$, i.e. $m > 0$). 
From the Lesser Geode Sum Theorem, we get:

\begin{theorem}[Single Shape Geode Elements are hyper-Catalans] \label{thm:singleshapegeode}
$$G( m \vec{k}) = C( (m+1) \vec{k}) $$
\end{theorem}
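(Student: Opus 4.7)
The plan is to apply the Lesser Geode Sum (Theorem \ref{thm:geodelessersum}) to the type $\m=(m+1)\vec{k}$ and observe that the sum on the right collapses to a single term. The reason is that $(m+1)\vec{k}$ has exactly one non-zero component, namely its $k$-th, which equals $m+1$. By the definition of $D_\m$ and $L(\m)$, this means $D_{(m+1)\vec{k}}=1$ and hence $|L((m+1)\vec{k})|=1$.

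To identify the single element of $L((m+1)\vec{k})$, I would unpack the definition: $\k\in L(\m)$ iff $\m=\k+\vec{j}$ for some natural $j\ge 2$. For $\m=(m+1)\vec{k}$, this forces $j=k$ (any other choice would give $\k$ a negative component), and then $\k = (m+1)\vec{k} - \vec{k} = m\vec{k}$. Thus $L((m+1)\vec{k})=\{m\vec{k}\}$.

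Finally, I would plug into Theorem \ref{thm:geodelessersum}: since $(m+1)\vec{k}\ne[\,]$ (as $m\ge 0$ forces $m+1\ge 1$), the theorem applies and gives
\begin{equation*}
C((m+1)\vec{k}) \;=\; \sum_{\k\in L((m+1)\vec{k})} G_{\k} \;=\; G(m\vec{k}),
\end{equation*}
which is the desired identity. There is no real obstacle here; the work is entirely in setting up the notation and confirming that the single-shape condition makes the lesser set a singleton. The substantive content lies in Theorem \ref{thm:geodelessersum}; the present result is essentially the observation that in the single-shape (Fuss) case, that sum degenerates and yields a clean closed form for the corresponding Geode entry in terms of a hyper-Catalan.
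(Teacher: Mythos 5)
Your proposal is correct and is essentially the paper's own argument: the paper likewise derives this theorem by noting that a single-shape type has $D_{\m}=1$, so the lesser set of $(m+1)\vec{k}$ is the singleton $\{m\vec{k}\}$ and the Lesser Geode Sum (Theorem \ref{thm:geodelessersum}) collapses to the stated identity. Your write-up just makes explicit the unpacking of $L((m+1)\vec{k})$ that the paper leaves implicit.
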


We've confirmed that these particular Geode elements are Fuss numbers, which happens to be one of the conjectures of \WR{}

For a recurrence that we can more generally use to calculate Geode elements from hyper-Catalans, we imagine a function $j=X(\m)$ that returns the index $j$ letting us express $G_{\m}$ in terms of $C_{\m+\vec{j}}$ and the other $G_{\k}$ where $\k \ne \m \textrm{ and } \k \in L(\m+\vec{j})$.
\begin{theorem}[The Geode Recurrence]\label{thm:geoderecurrence}
$$
G_{\m} = C_{\m + \vec{X}(\m)} \quad  - \sum_{\k \in L(\m + \vec{X}(\m)) - \{ \m \}} G_{\k}
$$
\end{theorem}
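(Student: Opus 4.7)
The plan is to derive Theorem \ref{thm:geoderecurrence} as an immediate rearrangement of the Lesser Geode Sum (Theorem \ref{thm:geodelessersum}), applied at the shifted type $\m + \vec{X}(\m)$ rather than at $\m$ itself.

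First, I would check that $\m$ is always a lesser of $\m + \vec{X}(\m)$. Writing $j = X(\m)$, this is automatic from the definition of $L$: a vector $\k$ belongs to $L(\m + \vec{j})$ exactly when $\m + \vec{j} = \k + \vec{i}$ for some $i \ge 2$ with $(\m + \vec{j})_i > 0$. Taking $i = j$ is always legitimate, because $(\m + \vec{j})_j = m_j + 1 > 0$, and the corresponding $\k$ is precisely $\m$. So $\m \in L(\m + \vec{X}(\m))$ regardless of the value returned by $X$, provided only that $X(\m) \ge 2$ so that $\vec{X}(\m)$ is a well-defined basis vector.

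Second, since the shifted type $\m + \vec{X}(\m)$ is never the null vector (its $X(\m)$-th component is at least $1$), Theorem \ref{thm:geodelessersum} applies and gives
$$
C_{\m + \vec{X}(\m)} \;=\; \sum_{\k \in L(\m + \vec{X}(\m))} G_{\k} .
$$
Splitting off the $\k = \m$ contribution, justified by the first step, yields
$$
C_{\m + \vec{X}(\m)} \;=\; G_{\m} \;+\; \sum_{\k \in L(\m + \vec{X}(\m)) - \{\m\}} G_{\k} ,
$$
and solving for $G_{\m}$ produces the claimed identity. The edge case $\m = [\,]$ works without modification: then $L(\vec{X}([\,])) = \{[\,]\}$ and the residual sum is empty, leaving $G_{[\,]} = C_{\vec{X}([\,])} = 1$.

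The main point, more a matter of interpretation than a genuine obstacle, is that the theorem leaves the selector $X$ deliberately underspecified; the argument shows that any function with $X(\m) \ge 2$ yields a valid recurrence. Different choices repackage the same identity in different ways, a freedom that can be exploited in applications, for example by choosing $X(\m)$ to minimize the number of terms on the right or to steer the iterated recursion toward a canonical expansion in hyper-Catalans alone. The substantive combinatorial content lies already in Theorem \ref{thm:geodelessersum}; the recurrence merely isolates $G_{\m}$ from its fellow lessers.
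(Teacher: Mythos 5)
Your proof is correct and matches the paper's approach exactly: the paper presents the Geode Recurrence as an immediate rearrangement of the Lesser Geode Sum (Theorem \ref{thm:geodelessersum}) applied at the shifted type $\m + \vec{X}(\m)$, which is precisely your argument. Your added checks that $\m \in L(\m+\vec{X}(\m))$ and that the $\m=[\,]$ case degenerates to $G_{[\,]}=C_{\vec{X}([\,])}=1$ are sound and slightly more careful than the paper's implicit treatment.
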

$X(\m)$ can be any function that returns a valid index; as an example, let's define $X_k(\m)=k$ for $k \ge 2$ (our type vectors are indexed starting at $2$).

We get a different decomposition of $G_{\m}$ into an integer combination of $C_{\n}$ depending on which $k$ we choose.
Let's look at $G_{111}$ where we choose $X_2(\m)$ for $X(\m).$
The recurrence says:
\begin{equation}
    G_{111} = C_{211}-  G_{201}- G_{21} 
\end{equation}
By recursively expanding (using the $X_2$ recurrence) the $G$s that appear,
we can express $G_{111}$ in terms of hyper-Catalans.
\begin{align}
G_{201} &=C _{301}- G_3 = C_{301}- C_4 
\\[-2pt]
G_{21} & =C_{31}-G_3= C_{31}- C_4
\\[-2pt]
G_{111} &= C_{211}-  C_{301}+  C_4 - C_{31} + C_4 
\end{align}
The choice of $k$ determines the expansion.
We expand for $k=2$ through $6$:
\begin{align}
G_{111} = {} &  319  {}  \nonumber
\\[-2pt] 
= {} & C_{211}  - C_{301}  - C_{31} + 2 C_{4}
\\[-2pt] = {} & C_{121}  - C_{031}  - C_{13} +  2 C_{04}
\\[-2pt] = {} & C_{112}  - C_{013} - C_{103}+  2 C_{004}
\\[-2pt] = {} & C_{0301}+  C_{031}+  C_{1111}+  C_{301}+  C_{31}- C_{0211} - C_{2011}- C_{2101}  
\\[-2pt] & \qquad
- 4 C_{4}- 2 C_{04}+ 2 C_{3001}  \nonumber
\\[-2pt]  = {} & C_{03001}+  C_{031}+ C_{11101}+ C_{301}+ C_{31}  - C_{02101} - C_{20101} - C_{21001}
\\[-2pt] & \qquad
- 4 C_{4}- 2 C_{04}+ 2 C_{30001} \nonumber
\end{align}
The multitude of expressions for each Geode element seems to suggest that the Geode eliminates some redundancy inherent in the hyper-Catalans.
From the Geode factorization we know Geode elements are always natural numbers.
It is unknown what if anything is counted by the Geode array.

An $X(\m)$ that seems to lead to short expansions is as follows:
For a type $\m$, let $j=X(\m)$ be the index such that for all $i$, $m_j \ge m_i$ and also that if $m_j=m_i$ then $j \le i$.  In English, $j$ is the index of the biggest $m_j$, and is the smallest such $j$ in the event of a tie.

As we increase one index, we decrease all the others, one at a time.  Eventually an index is reduced to zero, which reduces the number of indices. Eventually we get to Geode elements with a single index, which are Fuss numbers.
So it's reasonably clear that, for the above described $X(\m)$, the Geode Recurrence recursively applied always terminates, always expresses a Geode element as a finite sum of hyper-Catalans (possibly with repetitions).

\section{Proving Wildberger's Geode Conjectures}

\WR{} conjecture a closed form for $G[m_2,m_3]$ (the Geode Bi-Tri array) and then generalize that into a closed form for $G[0,\ldots,0,m_k, m_{k+1}]$ where there are $k-2$ zeros.
These are special cases; not only are we restricted to $D_{\m}=2$ distinct shapes, the shapes must be \textbf{consecutive}, meaning $k+1$-gons and $k+2$-gons.
We prove both conjectures; the Bi-Tri case is a nice starting point.  We've so far avoided the closed form of the hyper-Catalan array, but now we need a bit of it for our proof. 

To prove Wildberger's simpler conjecture, we only need the Bi-Tri Array, the hyper-Catalans counting divisions into $m_2$ triangles, $m_3$ quadrilaterals and no other shapes.
From Theorem \ref{thm:hypercatalanclosedform}, it has the closed form: 
\begin{equation}
C[m_2,m_3] = \dfrac{( 2m_2 + 3m_3)!}{(1 + m_2+ 2m_3)! \,  m_2! \, m_3! } 
\end{equation}

\begin{theorem}[Geode Bi-Tri Closed Form] \label{thm:geodebitriclosedform}
Let
$$
H(m,n) =  \frac{(2m + 3n + 3)!}{(2m +  2n + 3) (m + n+ 1)  \ (m + 2n + 2)! \, m! \, n!}    .
$$
Then $G[m,n]=H(m,n).$
\end{theorem}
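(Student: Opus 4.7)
The plan is to prove $G[m,n] = H(m,n)$ by induction on $n$, with the Lesser Geode Sum (Theorem~\ref{thm:geodelessersum}) supplying the recurrence that drives the induction. The key observation is that the type $[m+1,n]$ has exactly two lessers when $n \geq 1$, namely $[m,n]$ and $[m+1,n-1]$, so the Lesser Geode Sum gives $C[m+1,n] = G[m,n] + G[m+1,n-1]$, i.e.
\begin{equation*}
G[m,n] = C[m+1,n] - G[m+1,n-1] \qquad (n \geq 1).
\end{equation*}
This expresses $G[m,n]$ in terms of Bi-Tri hyper-Catalans and Geode elements with smaller second coordinate, which is precisely what induction on $n$ wants.

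For the base case $n = 0$, the Single Shape Geode Elements theorem (Theorem~\ref{thm:singleshapegeode}) gives $G[m,0] = G(m\vec{2}) = C((m+1)\vec{2}) = C[m+1]$, the $(m+1)$-st Catalan number $(2m+2)!/((m+2)!(m+1)!)$. A direct substitution in the definition of $H$ collapses $H(m,0)$ to the same value, so the base case holds.

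For the inductive step, fix $n \geq 1$ and assume $G[m', n-1] = H(m', n-1)$ for all $m' \geq 0$. Using the displayed recurrence and the inductive hypothesis on $G[m+1,n-1]$, it suffices to verify the purely algebraic identity
\begin{equation*}
C[m+1,n] = H(m,n) + H(m+1, n-1).
\end{equation*}
Plugging in the closed forms for $C[m+1,n]$ and for $H$, the common factor $(2m+3n+2)!/\bigl[(m+2n+2)!\,(m+1)!\,n!\bigr]$ can be pulled out of the right-hand side, and the identity reduces to the quadratic relation $(2m+3n+3)(m+1) + (m+2n+2)n = (2m+2n+3)(m+n+1)$, which one checks by expansion; both sides equal $2(m+n)^2 + 5(m+n) + 3$.

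The main obstacle is really just this algebraic verification, and the identity works out so cleanly because the "unusual" denominator factors $(2m+2n+3)(m+n+1)$ in $H$ are engineered exactly to produce $(2(m+n)+3)((m+n)+1)$ upon combining the two $H$-terms over a common denominator. In that sense the formula is tailored to satisfy the Lesser Geode Sum recurrence, so the induction proceeds without surprise once the base case and the recurrence are in place.
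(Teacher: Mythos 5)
Your proposal is correct and follows essentially the same route as the paper's proof: induction on $n$ with the base case from the Single Shape Geode theorem and the inductive step from the Lesser Geode Sum recurrence $G[m,n]=C[m+1,n]-G[m+1,n-1]$, reduced to the same algebraic identity (the paper writes it with $n+1$ in place of your $n$, a purely cosmetic shift). Your explicit verification that both sides of the quadratic relation equal $2(m+n)^2+5(m+n)+3$ checks out.
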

\begin{proof}
We proceed by induction on $n$.

\textbf{Base. } $n=0$.  Show $G[m,0]=H(m,0)$.
This is the case with type $\m=[m]$, $D_{\m}=1$, a single shape, namely triangles. That's $k=2$ in Theorem \ref{thm:singleshapegeode}, which tells us 
\begin{equation}
G[m,0]=G[m]=C[m+1]
\end{equation}
the $m+1$st Catalan number.
We compare that to:
\begin{equation}
H(m,0)
= \frac{(2m+ 3)!}{(2m+ 3)(m+ 1) (m+ 2)! \, m! }
= \frac{(2m+ 2)!}{(m+ 1)! (m+ 2)! }=C[m+1]
\end{equation}
and conclude $G[m,0]=H(m,0)$ for $m\ge 0$.

\textbf{Step. } We first show 
$H(m,n+1)=C[m+1,n+1]-H(m+1,n)$.
\begin{align}
& H(m,n+1) + H(m+1,n)  =
 \\ &
\dfrac{(2m+3n+5)!} {(2m \p 2n \p 5)(m \p n\p 2)  (m \p 2n \p3)! m! n!  } 
  \left( 
\dfrac{2m + 3n + 6}{(m \p 2n \p 4)(n\p1)}
\p\dfrac{1}{m\p1}
\right)  \nonumber
\\ 
&
= \dfrac{(2 m + 3 n + 5)!}{ (m + 2 n + 4)! \,  (m + 1)! \,  (n + 1)!}
= C[m+1, n+1] \quad\checkmark \nonumber
\end{align}
For $m \ge 0$, 
$G[m,n+1]=C[m+1,n+1]-G[m+1,n]$ (Theorem \ref{thm:geodelessersum}) and we just showed $H(m,n+1)=C[m+1,n+1]-H(m+1,n)$.
For the induction step we assume for all $m \ge 0$ that
$G[m,n]=H(m,n)$ and show that implies $G[m,n+1]=H(m,n+1)$.  
In particular we may assume $G[m+1,n]=H(m+1,n)$ so $G[m,n+1]=C[m+1,n+1]-G[m+1,n] =C[m+1,n+1]-H(m+1,n)=H(m,n+1)$ as required to conclude $G[m,n]=H(m,n)$. 
\end{proof}

Essentially the same argument applies to Wildberger's most general conjecture, of which the previous two are special cases.
It's a closed form expression for Geode elements with type vectors of the form $\m= m \overrightarrow{k} + n (\overrightarrow{k+1})$ for $k \ge 2$.
Again we'll need the closed form hyper-Catalans for these. 
Specializing Theorem \ref{thm:hypercatalanclosedform} to the two consecutive shape case, $m$ $k$-gons and $n$ $(k+1)$-gons,
\begin{align}
C( m \overrightarrow{k} + n (\overrightarrow{k+1}) ) &=  \dfrac{( km+(k+1)n )!}{(1 + (k-1)m+kn)! \,  m! \, n!} .
\end{align}

\begin{theorem}[Geode Elements with Two Consecutive Shapes] \label{thm:geodetwoconsecutive}
Let
\begin{align*}
& H(k,m,n) =
\frac {( k m +  (k + 1)(n +1))!}
{{ (k( m + n + 1) + 1)( m +  n + 1)} 
  ((k-1)m +  k(n + 1) )! \ m! \,n!}.
\end{align*}
Then
$G(m \overrightarrow{k} + n (\overrightarrow{k+1}) ) =H(k,m,n) $.
\end{theorem}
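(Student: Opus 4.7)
The plan is to induct on $n$ with $k$ fixed and $m$ universally quantified, mirroring the Bi-Tri proof of Theorem \ref{thm:geodebitriclosedform}. Write $\m(m,n) := m\overrightarrow{k} + n(\overrightarrow{k+1})$ as shorthand. The non-zero components of $\m(m+1,n)$ are at positions $k$ and $k+1$ (when $n \ge 1$), so its lessers are exactly $\m(m,n)$ and $\m(m+1,n-1)$; Theorem \ref{thm:geodelessersum} therefore gives the step-recurrence
\begin{equation*}
G(\m(m,n)) \;=\; C(\m(m+1,n)) \;-\; G(\m(m+1,n-1)), \qquad m\ge 0,\ n\ge 1.
\end{equation*}

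For the base case $n=0$, the type $\m(m,0) = m\overrightarrow{k}$ has a single shape, so Theorem \ref{thm:singleshapegeode} yields $G(m\overrightarrow{k}) = C((m+1)\overrightarrow{k})$. Applying the Fuss specialization of Theorem \ref{thm:hypercatalanclosedform} to the right-hand side, then cancelling a factor of $(k(m+1)+1)(m+1)$ into the numerator factorial, exactly reproduces $H(k,m,0)$, establishing the base.

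For the induction step, assume $G(\m(m,n)) = H(k,m,n)$ for all $m \ge 0$ at some fixed $n$. Substituting the inductive hypothesis for $G(\m(m+1,n))$ in the step-recurrence (with $n$ replaced by $n+1$) reduces the theorem to the purely algebraic identity
\begin{equation*}
H(k,m,n+1) + H(k,m+1,n) \;=\; C(\m(m+1,n+1)), \qquad k\ge 2,\ m,n \ge 0.
\end{equation*}
The two $H$-terms share the denominator factor $(k(m+n+2)+1)(m+n+2)$ and, after extracting the common factorial $((k-1)m + kn + 2k-1)!$ together with $m!\,n!$, the identity collapses to the rational equation
\begin{equation*}
\frac{km + kn + 2k + n + 2}{((k-1)m + kn + 2k)(n+1)} + \frac{1}{m+1} \;=\; \frac{(km + kn + 2k + 1)(m+n+2)}{((k-1)m + kn + 2k)(m+1)(n+1)},
\end{equation*}
which clears to a polynomial identity in $k,m,n$ that expands and checks term by term.

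The main obstacle is bookkeeping rather than insight: carrying three parameters through the factorial manipulations, and keeping track of which linear factor in $k,m,n$ gets absorbed into which factorial, is strictly more delicate than the two-parameter Bi-Tri case. Conceptually, nothing new is required, because the recursion unfolded in the $n$-direction still involves exactly two summands (one hyper-Catalan and one smaller-$n$ Geode element), the role of the parameter $k$ being only to shift the exponents inside the factorials.
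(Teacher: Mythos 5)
Your proposal is correct and follows essentially the same route as the paper's proof: induction on $n$ with $m$ universally quantified, the base case via Theorem \ref{thm:singleshapegeode}, and the induction step reduced via Theorem \ref{thm:geodelessersum} to the algebraic identity $H(k,m,n+1)+H(k,m+1,n)=C((m+1)\overrightarrow{k}+(n+1)(\overrightarrow{k+1}))$, whose verification matches the paper's computation factor for factor. No further comment is needed.
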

\begin{proof}
We proceed as before.

\textbf{Base. } $n=0$.  Show $ G(m 
\vec{k}) =H(k,m,0) $.  

The Single Shape Geode Theorem (Theorem \ref{thm:singleshapegeode}) tells us 
$G( m \vec{k}) = C( (m+1) \vec{k}) $
is a Fuss number, the number of subdigons subdivided into $m\! + \! 1 \ \ (k \!+\! 1)$-gons.
\begin{align}
H(k,m,0) 
&=\frac {( k (m+1) +1)!} {   (k( m + 1) + 1)( m + 1) \, ((k-1)m +  k )! \, m! }
\\
&=\frac {( k (m+1))!} {  ((k-1)(m+1) + 1 )! \, ( m + 1) ! }
=  C( (m+1) \vec{k})  = G( m \vec{k})  \nonumber
\quad \checkmark
\end{align}

\textbf{Step. }  We first show: 
\begin{align}
& H(k,m+1,n) + H(k,m,n+1)
\\
&
= 
\frac{( km+ kn + 2k + n + 1)!}
{ (k m +k n + 2k + 1) ( m +  n+ 2)  \, ( km +kn+2k-m-1 )! \, m! \, n! }
\nopagebreak \nonumber
\\ \nopagebreak &  \qquad \cdot  \left(
\frac{1}{m+1}
+
\frac{k m + kn + 2k + n+2}{(km+  k n + 2k  -m ) \, (n+1)}
\right)   \nonumber 
\\&  
=
\frac{( km+ kn + 2k + n + 1)!}
{ ( km +kn+2k-m )! \,( m+1)! \, (n+1) ! }
=C( (m+1) \overrightarrow{k} + (n+1) (\overrightarrow{k+1}) ) 
\nonumber 
\end{align}

The Lesser Geode Sum Theorem (Theorem \ref{thm:geodelessersum}) applied to $C( (m+1) \overrightarrow{k} + (n+1) (\overrightarrow{k+1}) ) $
says:
\begin{align}
& G( m \overrightarrow{k} + (n+1) (\overrightarrow{k+1}) )
+ G( (m+1) \overrightarrow{k} + n (\overrightarrow{k+1}) )
\\&
= C( (m+1) \overrightarrow{k} + (n+1) (\overrightarrow{k+1}) ) 
= H(k,m+1,n) + H(k,m,n+1) \nonumber
\end{align}

Now we assume
$G(m \overrightarrow{k} + n (\overrightarrow{k+1}) ) =H(k,m,n) $
for any $m \ge 0$ and show this implies
$G(m \overrightarrow{k} + (n+1) (\overrightarrow{k+1}) ) =H(k,m,n+1) $.
In particular we assume $G( (m+1) \overrightarrow{k} + n (\overrightarrow{k+1}) ) =H(k,m+1,n) $.
Those cancel in the above equality and we're left with:
\begin{align} &  H(k,m,n+1)
=G( m \overrightarrow{k} + (n+1) (\overrightarrow{k+1}) )
\end{align}
so we may conclude $G(m \overrightarrow{k} + n (\overrightarrow{k+1}) ) =H(k,m,n) $ for $m,n \ge 0$.
\end{proof}

\section{An unusual binomial coefficient identity}

The next thing we explore is the more general case with two distinct shapes, $D_{\m}=2$.
With just two distinct shapes, the Geode Recurrence (Theorem \ref{thm:geoderecurrence}) unrolls in a straightforward fashion. Let's use index function $X_j$, meaning we add one to $\vec{j}$'s scalar when iterating to get the hyper-Catalan index.
We have $j \ne k, m \ge 1, n \ge 1$.

\begin{align}
 G(m \vec{j}+ n \vec{k} ) &= C( (m+1) \vec{j}+ n \vec{k} ) -  G( (m+1) \vec{j}+ (n-1) \vec{k} )
\\ 
G( (m+1) \vec{j}+ (n-1) \vec{k} ) &= C( (m+2) \vec{j}+ (n-1) \vec{k} ) -  G( (m+2) \vec{j}+ (n-2) \vec{k} )
\nonumber
\\ 
G( (m+2) \vec{j}+ (n-2) \vec{k} ) &= C( (m+3) \vec{j}+ (n-2) \vec{k} ) -  G( (m+3) \vec{j}+ (n-3) \vec{k} )
\nonumber
\end{align}
The pattern continues until we encounter a zero scalar on $\vec{k}$.  We conclude:

\begin{theorem}[Geode elements with two distinct shapes] \label{thm:geodetwoshapes}
For $j \ne k, m \ge 1, n \ge 1$,
$$
G(m\vec{j} + n \vec{k}) = \sum_{i=0}^n (-1)^i \, C( \, (m+1+i)\vec{j} + (n-i)\vec{k} \, )
$$\vspace{-18pt}
\end{theorem}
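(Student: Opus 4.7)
The plan is to fix $j,k$ with $j\ne k$ and $m\ge 1$, and induct on $n\ge 1$. The engine for both the base case and the inductive step is the one-step identity already displayed just before the theorem statement: applying the Geode Recurrence (Theorem \ref{thm:geoderecurrence}) with the selector $X(\m)=j$ to $\m = m\vec{j}+n\vec{k}$ gives $\m+\vec{j}=(m+1)\vec{j}+n\vec{k}$, whose lesser set contains $\m$ itself and $(m+1)\vec{j}+(n-1)\vec{k}$. Removing $\m$ leaves one surviving Geode term, producing
\begin{equation*}
G(m\vec{j}+n\vec{k}) = C\bigl((m+1)\vec{j}+n\vec{k}\bigr) - G\bigl((m+1)\vec{j}+(n-1)\vec{k}\bigr).
\end{equation*}

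For the base case $n=1$, this identity reads $G(m\vec{j}+\vec{k})=C((m+1)\vec{j}+\vec{k})-G((m+1)\vec{j})$. The Single Shape Geode Theorem (Theorem \ref{thm:singleshapegeode}) evaluates the remaining Geode term as $G((m+1)\vec{j})=C((m+2)\vec{j})$, matching the claimed sum $\sum_{i=0}^{1}(-1)^{i}C((m+1+i)\vec{j}+(1-i)\vec{k})$. For the inductive step I assume the formula holds for $n$ at every $m\ge 1$, then apply the one-step identity at $n+1$ and substitute the inductive hypothesis (with $m$ replaced by $m+1$) into the trailing Geode term. Reindexing $i\mapsto i-1$ in the resulting sum and combining with the leading Catalan term yields $\sum_{i=0}^{n+1}(-1)^{i}C((m+1+i)\vec{j}+(n+1-i)\vec{k})$, closing the induction.

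I expect no genuine obstacle: everything beyond the one-step identity is sign-bookkeeping on a telescoping sum, and the boundary case where the $\vec{k}$-scalar reaches zero is handled cleanly by the Single Shape Geode Theorem. An alternative, essentially equivalent, presentation would unroll the one-step identity $n$ times directly, observing that after $n$ iterations the trailing Geode has the form $(-1)^{n}G((m+n)\vec{j})$, at which point the Single Shape Geode Theorem converts it into $(-1)^{n}C((m+n+1)\vec{j})$, the final term of the claimed sum. The only care needed is to make sure the selector $X=j$ is always legal, i.e.\ that the $j$-component of the current type is strictly positive at each step, which it is because it only grows as we iterate.
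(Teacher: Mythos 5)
Your proof is correct and follows essentially the same route as the paper: the paper simply unrolls the one-step identity $G(m\vec{j}+n\vec{k}) = C((m+1)\vec{j}+n\vec{k}) - G((m+1)\vec{j}+(n-1)\vec{k})$ until the $\vec{k}$-scalar hits zero, which is exactly the telescoping you formalize as an induction on $n$ (and you correctly handle the terminal single-shape term via Theorem \ref{thm:singleshapegeode}). Your alternative ``unroll $n$ times'' remark is, in fact, the paper's own presentation.
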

This contains two different decompositions of a given $G(m\vec{j} + n \vec{k})$ because we may swap $m$ and $j$ with $n$ and $k$ respectively.

We don't hold out much hope for a closed form for the Geode like the one for the hyper-Catalans (Theorem \ref{thm:hypercatalanclosedform}) because in the simplest non-consecutive shape case, Bi-Quad ($j=2, k=4$), relatively large primes appear along the diagonal that would be difficult to explain with a formula like that of the hyper-Catalans:
\begin{align}
&G_{101}=C_{201}-C_3=28 - 5=23
\\
&
G_{202}=C_{302}-C_{401}+C_5= 2002 - 495 + 42 = 1549
\\
&
G_{303}=C_{403}-C_{502}+C_{601}-C_7 = 193800 -55692 +  8008 -429 = 145687
\end{align}

Let's explore the simplest case, the Geode Bi-Tri Array, by setting $j=2$ and $k=3$.
\begin{equation}
G[m,n] = \sum_{i=0}^n (-1)^i \, C[m+1+i, n-i]  
\end{equation}
Expanding with Theorem \ref{thm:hypercatalanclosedform}, namely
$ 
C[m,n] = \frac{( 2m + 3n)!}{(1 + m+ 2n)! \,  m! \, n! } 
$,
and equating $G[m,n]$ from the Geode Bi-Tri Closed Form (Theorem \ref{thm:geodebitriclosedform}), we get:
\begin{align}
 & 
  \frac{(2m + 3n+  3)!}{(2m +  2n +  3)(m +  n +  1) (m +  2n + 2)! \, m! \, n!}
\\ & \qquad =   
 \sum_{i=0}^{n} (-1)^i \,  \frac{(2+  2m + 3n  -i)!}{(2 +m+2n-i)! \,  (1+m+i)! \, (n-i)! }  \nonumber
\end{align}
Routine manipulation and the substitution $s=m+n$ yields:
\begin{theorem}[An Unusual Binomial Coefficient Identity]
\begin{align*}
& \binom{ s }{n}
=  (3+2s ) \sum_{i=0}^{n} (-1)^i \,  
 \dfrac{ 
 \binom{2+s+n}{i}  \binom{s+1}{n-i}   }
 { 
 (i+1)   \binom{  3+ 2s+n}{i+1} } 
\end{align*}
\end{theorem}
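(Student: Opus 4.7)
The plan is to derive this identity by chaining together three previously established results: the decomposition of Geode elements with two distinct shapes (Theorem \ref{thm:geodetwoshapes}), the Geode Bi-Tri closed form (Theorem \ref{thm:geodebitriclosedform}), and the hyper-Catalan closed form (Theorem \ref{thm:hypercatalanclosedform}). The identity is exactly what emerges when the equation displayed just above the theorem statement is cleaned up and rewritten in binomial-coefficient notation, so the proof is essentially the ``routine manipulation and substitution $s=m+n$'' alluded to in the text. No new insight into the Geode is needed; the identity is a corollary of the fact that two independent derivations of $G[m,n]$ must agree.

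Concretely, I would first specialize Theorem \ref{thm:geodetwoshapes} to $j=2,\ k=3$ to obtain the alternating sum $G[m,n]=\sum_{i=0}^n(-1)^i C[m{+}1{+}i,n{-}i]$, equate this to $H(m,n)$ via Theorem \ref{thm:geodebitriclosedform}, and expand every hyper-Catalan on the right using Theorem \ref{thm:hypercatalanclosedform}. This produces the factorial equation shown in the excerpt. After the substitution $s=m+n$, the factorial arguments $2m+3n+3$, $m+2n+2$, $2+2m+3n-i$, and $2+m+2n-i$ rearrange into expressions in $s$, $n$, and $i$ alone. The last step is to regroup the resulting ratios of factorials as the binomial coefficients $\binom{s}{n}$, $\binom{2+s+n}{i}$, $\binom{s+1}{n-i}$, and $\binom{3+2s+n}{i+1}$ appearing in the target, together with the scalar factors $(i+1)$ and $(3+2s)$.

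The main obstacle is purely bookkeeping. In particular, one must verify that the factor $(1+m+i)!$ in each summand combines with a piece of $(2+2m+3n-i)!$ to produce both the $(i+1)$ in the denominator and the $\binom{3+2s+n}{i+1}$, and that the prefactor $(2m+2n+3)(m+n+1)$ in the denominator of $H(m,n)$ clears correctly, leaving only the $(3+2s)$ prefactor on the right-hand side. Once each side is put over a common denominator of factorials and every binomial is expanded, the identity reduces to matching factorial expressions term by term. The argument is mechanical but requires care to track which $+1$s and $+2$s belong to which factor; aside from that the proof is a short computation.
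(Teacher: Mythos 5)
Your proposal matches the paper's own derivation exactly: specialize Theorem \ref{thm:geodetwoshapes} to $j=2$, $k=3$, equate the resulting alternating hyper-Catalan sum to $H(m,n)$ from Theorem \ref{thm:geodebitriclosedform}, expand via Theorem \ref{thm:hypercatalanclosedform}, and substitute $s=m+n$ before regrouping into binomial coefficients. The paper likewise treats the final rearrangement as ``routine manipulation,'' so your account is correct and essentially identical in approach.
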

To generalize, 
we use Theorem \ref{thm:geodetwoshapes}, equating it to the result of Theorem \ref{thm:geodetwoconsecutive}, substituting for the hyper-Catalans using Theorem \ref{thm:hypercatalanclosedform}.
For $k \ge 2, m \ge 1, n \ge 1$, equating the two expressions for $G(m\vec{k} + n (\vec{k+1})) $ and again substituting $m \to n-s$ yields the general result. 
For presentation, let's juggle the variables $k \to t, n \to k, s \to n$:
\begin{theorem}[A Family of Binomial Coefficient Identities] 
\begin{align*}
\binom {n}{k}=
(t+1 + t n )\sum_{i=0}^k (-1)^i \,  
\dfrac{
\binom{t + (t-1) n + k }{i}  
\binom{n+1}{k-i}
}
{(i+1) 
\binom{ t+1 + t n + k }{i+1}}
\end{align*}
\end{theorem}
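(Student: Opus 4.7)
The plan is to derive this family of identities by equating two different closed-form expressions for the Geode element $G(m\vec{k} + n\vec{k+1})$, in direct parallel with the derivation of the preceding Bi-Tri identity. First I would specialize Theorem~\ref{thm:geodetwoshapes} to the case of two consecutive shapes (relabelling its abstract indices $j, k$ as $k, k+1$), obtaining
\begin{equation*}
G(m\vec{k} + n\vec{k+1}) = \sum_{i=0}^n (-1)^i\, C\bigl((m+1+i)\vec{k} + (n-i)\vec{k+1}\bigr).
\end{equation*}
Theorem~\ref{thm:geodetwoconsecutive} gives the same Geode element the closed form $H(k,m,n)$, so equating the two expressions yields an equality between $H(k,m,n)$ and this alternating sum of hyper-Catalans.

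Next, I would insert the hyper-Catalan closed form from Theorem~\ref{thm:hypercatalanclosedform} into each summand; a short computation with the exponent sums $k(m+1+i)+(k+1)(n-i)$ and $1+(k-1)(m+1+i)+k(n-i)$ yields
\begin{equation*}
C\bigl((m+1+i)\vec{k} + (n-i)\vec{k+1}\bigr) = \frac{(km + (k+1)n + k - i)!}{((k-1)m + kn + k - i)!\,(m+1+i)!\,(n-i)!},
\end{equation*}
so both sides of the equality become explicit ratios of factorials in $(k,m,n,i)$. Following the recipe used for the Bi-Tri identity, I would pull the factor $(k(m+n+1)+1)(m+n+1)$ appearing in the denominator of $H(k,m,n)$ out in front, rewrite the remaining factorials in each summand as products of binomial coefficients, and apply the substitution $m \mapsto n - s$ (as in the Bi-Tri case) so that every quantity depends only on $(k,s,n)$. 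The cosmetic renaming $k \to t$, $n \to k$, $s \to n$ then places the identity in the stated form.

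The main obstacle is purely algebraic bookkeeping: identifying precisely how the three binomial coefficients $\binom{t+(t-1)n+k}{i}$, $\binom{n+1}{k-i}$, and $\binom{t+1+tn+k}{i+1}$ arise from the factorial ratio, and verifying that the residual scalar factor on each summand is exactly $(t+1+tn)/(i+1)$. No new combinatorial insight is required, since the identity is equivalent to the already-established equality of the two expressions for $G(m\vec{k} + n\vec{k+1})$; once the factorial arguments are laid out side by side, the match is forced.
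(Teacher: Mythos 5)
Your proposal follows exactly the paper's own derivation: equate the alternating hyper-Catalan sum of Theorem~\ref{thm:geodetwoshapes} (specialized to consecutive shapes) with the closed form of Theorem~\ref{thm:geodetwoconsecutive}, substitute Theorem~\ref{thm:hypercatalanclosedform}, and perform the same change of variables and renaming; your intermediate factorial expression for $C\bigl((m+1+i)\vec{k}+(n-i)\overrightarrow{k+1}\bigr)$ checks out. The only quibble is the direction of the substitution (it should be $s=m+n$, i.e.\ $m\mapsto s-n$, to match the Bi-Tri case), a slip the paper's own prose also makes.
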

The above derivation only holds for $t \ge 2$, but sympy seems to indicate that $t=1$ works fine, as do integers $t \le 0$, which give rise to binomial coefficients with negative tops, easily interpreted with falling powers. $t=1$ gives:
\begin{align}
\binom {n}{k}=
(2 +  n )
\sum_{i=0}^k (-1)^i \,  
\dfrac{
\binom{k+1 }{i}  
\binom{n+1}{k-i}
}
{(i+1) 
\binom{ 2+ n + k }{i+1}}
\end{align}
Let's write out $\binom{4}{2}$: $n=4, k=2, t=1$.
\begin{align}
&
6 \left( \dfrac{  \binom{3 }{0}  \binom{5}{2} }{1 \binom{ 8 }{1}} - \dfrac{\binom{3}{1} \binom{5}{1} }{2 \binom{8}{2}} + \dfrac{  \binom{3}{2} \binom{5}{0} } {3 \binom{8}{3}} \right)
 = 6 \left( \frac{5}{4} - \frac{15}{56}  + \frac{1}{56} \right)
= 6 = \binom {4}{2} \quad\checkmark
\end{align}
Nothing particularly special about $t=1$; here's $t=100$:
\begin{align}
\binom{4}{2} 
= 501 \left( \textstyle \frac{ \binom{5}{2 } \binom{498}{0 }}{1 \binom{503}{1 }} - \frac{ \binom{5}{1 } \binom{498}{1 }}{2 \binom{503}{2 }} + \frac{ \binom{5}{0 } \binom{498}{2 }}{3 \binom{503}{3 }} \right)
\textstyle = 501 \left( \frac{10}{503} -\frac{1245}{126253}  + \frac{41251}{21084251} \right) 
\end{align}

Thanks to Marko Riedel, who deftly supplied two proofs 
when I posted this identity at math.stackexchange.com/questions/4906245 back in April, 2024.

\section{The Geode and the Encyclopedia}

\WR{} list around fifteen sequences derived from the hyper-Catalans that are found in the Online Encyclopedia of Integer Sequences~\cite{OEIS}.
Here we generate the analogous sequences for the Geode and find many that have yet to appear in the OEIS.

We know the single-shape Geode elements are Fuss numbers, which are well represented in the OEIS. 
We've found 
$G[n]=C[n+1]$ is A000108, the Catalans; 
$G[0,n]=C[0,n+1]$ is A001764, the Fuss numbers for decomposition into quadrilaterals;
$G[0,0,n]=C[0,0,n+1]$ is A002293, Fuss pentagons;
$G[0,0,0,n]=C[0,0,0,n+1]$ is A002294, Fuss hexagons.

Going beyond a single shape to two distinct shapes, the corresponding hyper-Catalan slices appear in OEIS but these Geode slices do not. 
\begin{align*}
G[n,1] &=
3, 16, 70, 288, 1155, 4576, 18018, 70720, 277134, 1085280  
, \cdots \\[-4pt]
G[1,n] &=
2, 16, 110, 728, 4760, 31008, 201894, 1315600, 8584290 
, \cdots \\[-4pt]
G[n,2] &=
12, 110, 702, 3850, 19448, 93366, 433160, 1961256, 8721000 
, \cdots \\[-4pt]
G[1,0,n] &=
2, 23, 224, 2091, 19250, 176410, 1615068, 14793944 
, \cdots \\[-4pt]
G[0,1,n] &=
3, 33, 315, 2907, 26565, 242190, 2208843, 20173560 
, \cdots \\[-4pt]
G[n,0,1] &=
4, 23, 106, 453, 1870, 7579, 30394, 121108, 480624, 1902470 
, \cdots \\[-4pt]
G[0,0,1,n] &=
4, 56, 684, 8096, 95004, 1113024, 13050492, 153282272 
, \cdots
\end{align*}

The Little Schroeder numbers, A001003, are a one dimensional projection of the hyper-Catalans, the coefficients of $v$ in 
$\Sb[v,v^2,v^3,\ldots]$.
The Little Schroeder Geode numbers come from $\Gb[v,v^2,v^3,\ldots]$:
$$
\cdots + 16514 v^{7} + 3376 v^{6} + 706 v^{5} + 152 v^{4} + 34 v^{3} + 8 v^{2} + 2 v + 1. $$

The Riordan numbers are the coefficients of $e$ in $\Sb[e^2,e^3,e^4,\ldots]$, A005043.
The Riordan Geode numbers come from $\Gb[e^2,e^3,e^4,\ldots]$:
$$
\cdots + 982 e^{9} + 371 e^{8} + 141 e^{7} + 55 e^{6} + 21 e^{5} + 9 e^{4} + 3 e^{3} + 2 e^{2} + 1
$$

The Cayley numbers (A033282 dovetailed) are a two dimensional projection of the hyper-Catalans, coefficients of $\Sb[vf, v^2f, v^3f, \ldots]$.  
The Cayley Geode numbers are coefficients of $\Gb[vf, v^2f, v^3f, \ldots]$:
\begin{align*}
&  
1 
+ v^1(2 f)  
+ v^{2} \left(5 f^{2} + 3 f\right)
+ v^{3} \left(14 f^{3} + 16 f^{2} + 4 f\right) 
\\[-4pt] &
+ v^{4} \left(42 f^{4} + 70 f^{3} + 35 f^{2} + 5 f\right) 
\\[-4pt] & 
+ v^{5} \left(132 f^{5} + 288 f^{4} + 216 f^{3} + 64 f^{2} + 6 f\right) 
\\[-4pt] & 
+ v^{6} \left(429 f^{6} + 1155 f^{5} + 1155 f^{4} + 525 f^{3} + 105 f^{2} + 7 f\right)
\\[-4pt] & 
+ v^{7} \left(1430 f^{7} + 4576 f^{6} + 5720 f^{5} + 3520 f^{4} + 1100 f^{3} + 160 f^{2} + 8 f\right) 
+ \cdots
\end{align*}

\section{Conclusion}

The Geode is a mysterious object that appears to underlie the hyper-Catalans.
By proving Wildberger's conjectures, we make some small progress toward understanding it.

Wildberger's conjectures were restricted to the case of at most two distinct, consecutive shapes.
Glancing beyond, the Geode Bi-Quad array, two distinct non-consecutive shapes, has large primes, indicating its formula is not the simply the ratio of factorials and other factors of small linear combinations of $m_2$ and $m_4$.
For three or more distinct shapes, the straightforward unrolling of Theorem \ref{thm:geodetwoshapes} becomes a more complicated tree-like descent.

The Geode is something new.
Thanks to Dan Eilers of Irvine, CA for finding A243660 (the x = 1 + q Narayana
triangle at m = 2) is essentially the Geode Bi-Tri array $\Gb[m_2,m_3]$. Similarly, A243661 (same at
m = 3) is essentially the Geode Tri-Quad array $\Gb[0, m_3, m_4]$.
The various sequences derived from the Geode beyond these and the Fuss numbers do not seem to appear in the OEIS.
We still have no idea of its general formula (beyond the Geode Recurrence) or what if anything it counts.

The connection between the algebra of multisets of subdivided polygons and the zeros of polynomials
is a key that unlocks new results in combinatorics.
Here we were able to find combinatorial identities without Lagrange Inversion, differentiation or other advanced combinatorial techniques, employing only the geometric polynomial formula, the Geode factorization and the multinomial theorem.
We look forward to pursuing more identities in the future.

\section{Developments since publication of Wildberger and Rubine}

The above was mostly written in 2024 and early 2025.
Since the publication of Wildberger and Rubine, online in April, 2025, we've received proofs of the Geode Conjectures.

Ira Gessel (personal communication) sent a proof of Theorem $\ref{thm:geodetwoconsecutive}$ that independently shows all three Wildberger conjectures discussed above. 
Upon review, Professor Gessel commented that his proof was essentially the same as the one given here.

Gessel also proves an additional conjecture from Wildberger and Rubine, and uncovers a new array.

\begin{theorem}
With $2k$ parameters, $\Gb[-f,f,\ldots,-f,f] = \sum_{n \ge 0} k^n f^n$.
\end{theorem}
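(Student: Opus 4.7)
The plan is to exploit the vanishing of $\Sb_1$ under the prescribed substitution and recover $\Gb$ as the first-order coefficient of a perturbation. Write the substitution as $t_j \mapsto (-1)^{j+1} f$ for $2 \le j \le 2k+1$ and $t_j \mapsto 0$ otherwise. Under this, $\Sb_1 = \sum_j t_j$ maps to $0$, since $k$ copies of $-f$ and $k$ copies of $+f$ cancel. The Geode factorization $\Sb - 1 = \Sb_1 \Gb$ therefore specializes to the trivial identity $0 = 0$ and does not determine $\Gb$ directly. To cure this, I would perturb by replacing $t_2 \mapsto -f + \epsilon$ while keeping the other substitutions in place. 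Then $\Sb_1$ specializes to exactly $\epsilon$, and the factorization becomes $\Sb(\epsilon) - 1 = \epsilon\,\Gb(\epsilon)$ as an identity of formal series in $f$ and $\epsilon$.

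At $\epsilon = 0$ we have $\Sb(0) = 1$, and differentiating once in $\epsilon$ gives $\Sb'(0) = \Gb(0)$, so the desired value of $\Gb$ is the linear term of $\Sb$ in $\epsilon$. I would compute this by implicit differentiation of $g(\Sb(\epsilon), \epsilon) = 0$, where $g(\alpha,\epsilon) = 1 - \alpha + \epsilon\alpha^2 - f\alpha^2 + f\alpha^3 - f\alpha^4 + \cdots + f\alpha^{2k+1}$. Since $\partial_\epsilon g = \alpha^2$ and $\partial_\alpha g = -1 + 2\epsilon\alpha + f(-2\alpha + 3\alpha^2 - 4\alpha^3 + \cdots + (2k+1)\alpha^{2k})$, the implicit function theorem for formal power series (valid because the denominator below has nonzero constant term $-1$) yields $\Sb'(0) = -\partial_\epsilon g / \partial_\alpha g$ evaluated at $(\alpha,\epsilon) = (1,0)$.

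The crux is then the evaluation of the alternating sum $-2 + 3 - 4 + 5 - \cdots - 2k + (2k+1)$, which pairs into $k$ consecutive sums each equal to $1$ and hence totals $k$. Plugging in, $\partial_\epsilon g|_{(1,0)} = 1$ and $\partial_\alpha g|_{(1,0)} = -1 + kf$, so $\Sb'(0) = 1/(1 - kf) = \sum_{n \ge 0} k^n f^n$, which equals $\Gb$ under the prescribed substitution.

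The main obstacle is the opening conceptual move: recognizing that the Geode factorization degenerates under this substitution and that the correct response is to regard $\Sb_1$ as a free infinitesimal by perturbing a single coefficient. Once that is in place, the rest of the proof avoids any closed form for $\Gb$ or the hyper-Catalans and uses only the defining polynomial $g$, one implicit differentiation, and an elementary telescoping sum.
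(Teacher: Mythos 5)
Your argument is correct, and it takes a genuinely different route from the paper's. The paper (following Gessel) works purely algebraically: starting from $\Sb_1\Gb=\sum_{n\ge 2}t_n\Sb^n$, it factors $\Sb^n-1=(\Sb-1)(1+\Sb+\cdots+\Sb^{n-1})$, recognizes $(\Sb-1)/\Sb_1=\Gb$, and solves to get the general identity $\Gb=\bigl(1-\sum_{n\ge 2}t_n(1+\Sb+\cdots+\Sb^{n-1})\bigr)^{-1}$, which it then evaluates at the substitution using $\Sb=1$. You instead regularize the degenerate factorization by perturbing $t_2\mapsto -f+\epsilon$ so that $\Sb_1=\epsilon$, read off $\Gb(0)=\Sb'(0)$, and compute that derivative by implicit differentiation of $g(\Sb(\epsilon),\epsilon)=0$; this is legitimate as a formal-power-series computation since $\partial_\alpha g$ at $(1,0)$ has invertible constant term $-1$, and since each coefficient of $f^a\epsilon^b$ in the substituted $\Sb$ is a finite sum over types with boundedly many faces. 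The two computations meet at the end: your denominator $-\partial_\alpha g|_{\alpha=1}=1-\sum_n nt_n$ coincides with the paper's $1-\sum_n t_n(1+\Sb+\cdots+\Sb^{n-1})$ evaluated at $\Sb=1$, because $1+\alpha+\cdots+\alpha^{n-1}$ at $\alpha=1$ equals $n=\frac{d}{d\alpha}\alpha^n\big|_{\alpha=1}$, and both reduce to the same telescoping sum $-2+3-\cdots+(2k+1)=k$. What the paper's route buys is the explicit general inverse formula for $\Gb$ and, as a corollary, Gessel's observation that $\Gb=(1-\sum_n nt_n)^{-1}$ whenever $\sum_n t_n=0$; your route is more self-contained (only the defining polynomial and one formal derivative) and also extends immediately to that same generality, but does not produce a formula for $\Gb$ away from the locus $\Sb_1=0$. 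One small point to make explicit if you write this up: justify $\Sb(0)=1$ either from the factorization $\Sb-1=\Sb_1\Gb$ with $\Sb_1=0$, or by noting that $\alpha=1$ is the unique formal series zero of $g$ with constant term $1$ when $\Sb_1=0$.
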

\begin{proof}
We (says Gessel in our recounting) begin from $\Sb_1=t_2+t_3+t_4+\ldots$ and $ \Sb_1 \Gb = \Sb -1 = \sum_{n \ge 2} t_n \Sb^n$. 

Pulling out $\Sb_1$, factoring $\Sb^n-1$, then recognizing and solving for $\Gb$, we have:
\begin{align}
\Sb_1 \Gb &= \Sb_1 + \sum_{n \ge 2} t_n(\Sb^n -1)
\\    \Gb &=  1 + \sum_{n \ge 2} t_n \dfrac{ \Sb^n -1} { \Sb_1}
\\  \Gb&= 1 + \sum_{n \ge 2}  t_n \dfrac{ \Sb -1} { \Sb_1}( 1 + \Sb + \Sb^2 + \ldots + \Sb^{n-1})
\\   \Gb&= 1 + \Gb \sum_{n \ge 2}  t_n ( 1 + \Sb + \Sb^2 + \ldots + \Sb^{n-1})
\\   \Gb&=\left( 1 -  \sum_{n \ge 2}  t_n ( 1 + \Sb + \Sb^2 + \ldots + \Sb^{n-1})
\right)^{-1}
\end{align}
Now we evaluate the various terms at $-f, f,$ etc.
Clearly $\Sb_1[-f, f, \ldots, -f, f ]=-f+f+\ldots -f+f=0$ so per  $\Sb_1 \Gb = \Sb -1$ we conclude $\Sb[-f,f, \ldots]=1.$
We see the sum of $n$ powers of $\Sb[-f,f,\ldots]$ is simply $n$, and $t_n=(-1)^{n-1} f$ for $2 \le n \le 2k+1$, else $0$,
\begin{align}
 \Gb[-f, f, \ldots]&=\left( 1 - f \sum_{n=2}^{2k+1}  (-1)^{n-1} \, n \right)^{-1}
\\& = \left(1 - (-2 + 3 - 4 + 5 \ldots - 2k + (2k+1) ) \, f \right)^{-1}  
\\ \Gb[-f, f, \ldots]& = \dfrac{ 1}{ 1 - kf } 
\end{align}
which is of course $\sum_{n\ge 0} k^n f^n$.
\end{proof}
Gessel further observes for any parameters $t_2, t_3, \ldots$ where $t_2+t_3+ \ldots = 0$, then (formally):
\begin{align}
    G[t_2, t_3, \ldots] = \left( 1- \sum_{n \ge 2} n t_n\right) ^{-1} 
\end{align}

In a later email, Gessel shows results still hold when a general $t_1$ is included, $\Sb=1+ \sum_{n \ge 1} t_n \Sb ^n$, and in that context he goes on to define $\Ub$ by $\Sb=(1-\Ub)^{-1}.$ Then $\Ub=1-\Sb^{-1}.$ Dividing $\Sb$'s defining equation by $\Sb$ gives $1=\Sb^{-1}+\sum_{\n\ge 1} t_n \Sb^{n-1}$ so $\Ub= \sum_{\n\ge 1} t_n \Sb^{n-1}$, so necessarily has natural number coefficients.

He goes on to define $\Hb$ by $\Ub=\Sb_1 \Hb$.  We have $\Ub \Sb  = \Sb -1=\Sb_1 \Gb $ so $\Hb = \Ub/\Sb_1 = \Gb / \Sb$.  
Substituting $\Sb \Hb$ for $\Gb$ and dividing by $\Sb$, noting $\Sb^{-1}=1-\Ub = 1 - \Hb \Sb_1$, we have 
\begin{align}
    \Hb& = 1- \Hb\Sb_1 + \Hb \sum_{n \ge 1} t_\n ( 1 + \Sb + \Sb^2 + \ldots + \Sb^{n-1})
\\ \Hb &= 1 + \Hb \sum_{n \ge 1} t_n(\Sb+\Sb^2 + \ldots +\Sb^{n-1} ) \label{eqn:ges1}
\\  \Hb^{-1} &= 1-  \sum_{n \ge 1} t_n(\Sb+\Sb^2 + \ldots +\Sb^{n-1} )
\end{align}

He concludes from equation (\ref{eqn:ges1}) that $\Hb = \Gb/ \Sb$ has natural number coefficients. 
Clearly $\Hb^{-1}$ has integer coefficients, negative except for the constant. 
From 
$\Sb^{-1} = 1 - \Ub = 1 -  \Sb_1 \Hb$ we have:
 \begin{equation}
     \Sb = \left( 1 - \dfrac{\Sb_1}{ 1-  \sum_{n \ge 1} t_n(\Sb+\Sb^2 + \ldots +\Sb^{n-1} )} \right)^{-1}
 \end{equation}

Let's write the vertex layering of $\Hb$, made with software that assumes the coefficients start at $t_2$ and the types start at $m_2$, i.e. $\Sb= 1+ \sum_{n \ge 2}t_n \Sb^n$ and $\Sb_1 = t_2 +t_3 + \ldots$.

\begin{align}
\Hb[vt_2,& v^2t_3, v^3 t_4, \ldots] = 1 +  v t_{2}  +
v^{2} (2 t_{2}^{2} + 2 t_{3}) + 
v^{3} (5 t_{2}^{3} + 8 t_{2} t_{3} + 3 t_{4}) +
{} \nonumber \\& v^{4} (14 t_{2}^{4} + 30 t_{2}^{2} t_{3} + 13 t_{2} t_{4} + 7 t_{3}^{2} + 4 t_{5}) + 
{} \nonumber \\& v^{5} (42 t_{2}^{5} + 112 t_{2}^{3} t_{3} + 51 t_{2}^{2} t_{4} + 54 t_{2} t_{3}^{2} + 19 t_{2} t_{5} + 21 t_{3} t_{4} + 5 t_{6}) + 
{} \nonumber \\& v^{6} (132 t_{2}^{6} + 420 t_{2}^{4} t_{3} + 196 t_{2}^{3} t_{4} + 308 t_{2}^{2} t_{3}^{2} + 79 t_{2}^{2} t_{5} + 171 t_{2} t_{3} t_{4}  \nonumber \\[-5pt]&\qquad + 26 t_{2} t_{6} + 30 t_{3}^{3} + 29 t_{3} t_{5} + 15 t_{4}^{2} + 6 t_{7}) + 
{} \nonumber \\& v^{7} (429 t_{2}^{7} + 1584 t_{2}^{5} t_{3} + 750 t_{2}^{4} t_{4} + 1560 t_{2}^{3} t_{3}^{2} + 316 t_{2}^{3} t_{5}  \nonumber\\[-5pt]&\quad + 1012 t_{2}^{2} t_{3} t_{4} + 115 t_{2}^{2} t_{6} + 352 t_{2} t_{3}^{3} + 251 t_{2} t_{3} t_{5}  \nonumber\\[-5pt]&\quad  + 129 t_{2} t_{4}^{2} + 34 t_{2} t_{7} + 135 t_{3}^{2} t_{4} + 38 t_{3} t_{6} + 40 t_{4} t_{5} + 7 t_{8}) + 
{} \ldots
\end{align}
We can see the Catalans going down, and also a $1t_2+2t_3+3t_4+\ldots$ but beyond that 
a quick search didn't turn up anything in the OEIS.

The four Geode conjectures were also proven by Tewodros Amdeberhan and Doron Zeilberger, in an arXiv paper uploaded June 22, 2025 \cite{Amdeberhan2025}.
\vspace{10pt}
\noindent
%
\hrule
\begin{bibdiv}[]
\begin{biblist} 

\bib{Amdeberhan2025}{article}{
author = {Tewodros Amdeberhan},
author = {Doron Zeilberger},
title={\href{https://www.arxiv.org/pdf/2506.17862}{Proofs of Three Geode Conjectures}},
journal = {ArXiv.org},
year = {2025},
url = {https://www.arxiv.org/pdf/2506.17862},
note = {https://www.arxiv.org/pdf/2506.17862}
}

\bib{Aval2008}{article}{
author = {Nicolas Fuss},
title={\href{https://www.math.ucla.edu/~pak/lectures/Cat/Fuss1.pdf}{Solutio quaestionis, quot modis polygonum n laterum
in polygona m laterum, per diagonales resolui queat [Solution to the Question: In how many ways can a polygon with n sides be resolved by its diagonals?]}},
journal = {Nova Acta Acad. Sci. Imp. Pet.},
volume = {9},
pages = {243-251},
year = {1795},
}


\bib{Erdelyi1940}{article}{
title={\href{https://www.cambridge.org/core/services/aop-cambridge-core/content/view/4216D1DDB6DF23ADF56FBF3A8C7A6A7B/S0950184300002640a.pdf/some-problems-of-non-associative-combinations-2.pdf}{Some problems of non-associative combinations (2)}},
volume={32}, 
journal={Edinburgh Mathematical Notes}, 
publisher={Cambridge University Press}, 
author={Erd\'elyi, A.},
author={Etherington, I. M. H.},
year={1940},
pages={vii-xii},
}

\bib{French2018}{misc}{
title={\href{https://dc.etsu.edu/etd/3392}{Vector Partitions}},
author={French, Jennifer},
publisher={East Tennessee State University},
year={2018},
note={Electronic Theses and Dissertations. Paper 3392.},
url={https://dc.etsu.edu/etd/3392}
}

\bib{Graham1989}{book}{
  address = {Reading},
  author = {Graham, Ronald L. and Knuth, Donald E. and Patashnik, Oren},
  publisher = {Addison-Wesley},
  title = {Concrete mathematics: a foundation for computer science},
 year = {1989}
}

\bib{Knuth1997}{book}{
author = {Knuth, Donald E.},
title = {The Art of Computer Programming, Volume 1 (3rd Ed.): Fundamental Algorithms},
year = {1997},
isbn = {0201896834},
publisher = {Addison Wesley Longman Publishing Co., Inc.},
address = {USA}
}

\bib{Mane2024}{misc}{
author={Mane, S. R.},
year={2024},
title={\href{https://arxiv.org/pdf/1607.04144}{Multiparameter Fuss–Catalan numbers with application to algebraic equations}},
publisher={arXiv.org},
url={https://arxiv.org/pdf/1607.04144}
}

\bib{OEIS}{misc}{
author={OEIS Foundation Inc.},
year={2024},
title={\href{https://oeis.org/A000108}{The On-Line Encyclopedia of Integer Sequences}},
url={http://oeis.org}
}

\bib{Raney1960}{article}{
  title={\href{https://www.ams.org/journals/tran/1960-094-03/S0002-9947-1960-0114765-9/S0002-9947-1960-0114765-9.pdf}{Functional composition patterns and power series reversion}},
  author={Raney, George N.},
  journal={Trans am math soc},
  year={1960},
  volume={94},
  pages={441-451}
}

\bib{Wildberger2025}{article}{
author={Wildberger, N. J.},
author={Rubine, Dean},
title = {\href{https://doi.org/10.1080/00029890.2025.2460966}{A Hyper-Catalan Series Solution to Polynomial Equations, and the Geode}},
journal = {The American Mathematical Monthly},
volume = {132},
number = {5},
pages = {383--402},
year = {2025},
publisher = {Taylor \& Francis},
}

\end{biblist}
\end{bibdiv}
\end{document}